\documentclass[11pt]{amsart}
\usepackage{amssymb,mathrsfs,bm,graphicx,geometry,mathtools,url} 
\usepackage[backref=page]{hyperref}
\usepackage[textsize=tiny]{todonotes}

\title{Statistics of divisor functions over function fields}
\author[M. Afifurrahman]{Muhammad Afifurrahman}
\thanks{
MA is supported by the National Research Foundation of Korea (NRF) grant funded by the Korea government (MSIT) (No. RS-2025-02262988).}

\address[MA]{Department of Mathematical Sciences, Ulsan National Institute of Science and Technology, 50 UNIST-gil, Ulsan 44919, South Korea}
\email{afifurrahman@unist.ac.kr}
\date{\today}
\subjclass[2020]{11T55 (primary), 11N37}
\keywords{function fields, restricted divisor function}

\def\cE{{\mathcal E}}
\def\cF{{\mathcal F}}

\def\cM{{\mathcal M}}

\def \F {{\mathbb F}}

\def \Z {{\mathbb Z}}

\def\energy{\mathsf{E}}
\def\denergy{\mathsf{D}}
\def\aenergy{\mathsf{A}}

\newtheorem{theorem}{Theorem}

\newtheorem{corollary}[theorem]{Corollary}

\newtheorem{proposition}[theorem]{Proposition}

\theoremstyle{remark}

\numberwithin{equation}{section}
\numberwithin{theorem}{section}
\numberwithin{table}{section}
\numberwithin{figure}{section}
\begin{document}

\begin{abstract}Extending recent works on the restricted divisor function over integers, we consider an analogue of this function over the function field. We provide some exact and asymptotic formulas related to this function.
\end{abstract}
\maketitle
\tableofcontents
\section{Introduction}
\subsection{Background and motivation over integers} 
The problem of computing statistics of the divisor function $\tau$ over the integers has a long history. Two particular problems of this kind that has arisen are estimating the \textit{second moments} and \textit{correlations} of $\tau$, respectively \begin{equation}\label{eqn:momcorZ}
\sum_{n=1}^H \tau(n)^2,\qquad \sum_{n=1}^H \tau(n)\tau(n+K),
\end{equation} for a fixed $K\ne 0$, as $H\to\infty$.

Elementary estimates imply that, as $H\to \infty$ \[
\sum_{n=1}^H \tau(n)^2\sim \dfrac{1}{\pi^2} H(\log H)^3 .
\] Meanwhile, a classic result of Ingham~\cite{Ing}, later improved by Estermann~\cite{Est}, implies \[
\sum_{n=1}^H \tau(n)\tau(n+K) \sim \dfrac{6}{\pi^2}\sigma^{-1}(K) H (\log H)^2
\] as $H\to \infty$, where $\sigma^{-1}(n)\coloneqq \sum_{d|n} d^{-1}$.

These problems are equivalent to counting integer solutions to \begin{equation}\label{eqn:integers}
x_1x_2-x_3x_4=K
\end{equation} with $0<x_1x_2,x_3x_4\le H$, as $H\to \infty$. Within this perspective, it is natural to instead count the number of integer solutions to~\eqref{eqn:integers} with $0<x_1,x_2,x_3,x_4\le H$. This modification naturally gives rise to the \textit{restricted divisor function} over integers $\tau_H$, defined as \[
\tau_H(n) \coloneqq \# \{ (a,b) \colon ab=n, 1\le a,b\le H\}.
\] Note that the support of $\tau_H$ is $\{1,\dots,H^2\}$. With respect to this function, counting solutions to~\eqref{eqn:integers} is equivalent to deriving an asymptotic formula for \begin{equation}\label{eqn:div-restricted}
    \sum_{i>0} \tau_H(i)\tau_H(i+K).
\end{equation} We first discuss the problem~\eqref{eqn:div-restricted} for the case $K=0$.
This set-up has had a long line of research and appears in various number theoretical settings, such as the problem of classifying maximal-size product sets of random subsets of $\Z$ (e.g.\:\cite{M}), determining the irrational numbers $\alpha$ for which the pair correlation for the fractional parts of $n^2\alpha$ is Poissonian (e.g.\:\cite{Truelsen,Shpa12I}), and counting $2\times 2$ singular integer matrices (as seen in~\cite{Afif4}).  In addition,~\eqref{eqn:integers} represents the multiplicative energy of the set $\{-H,\dots,H\}$.

An asymptotic formula for~\eqref{eqn:div-restricted} was first found by adapting a method of Ayyad--Cochrane--Zheng~\cite{ACZ}, subsequently improved by Mercier~\cite{Merc} and Shi~\cite[Theorem 1]{ShiS}, see also Liu--Zhai~\cite{LiuZhai}. The best result at the time of this work, due to Shi and Liu--Zhai, shows that the number of solutions to~\eqref{eqn:integers}, as $H\to \infty$ is  \begin{align*}
    \dfrac{12}{\pi^2} H^2\log H + A H^2  + O(H^{547/416+\varepsilon})
\end{align*} for any $\varepsilon>0$, with $$A=\frac{4\gamma-2\zeta'(2)/\zeta(2)-1-2\zeta(2)}{\zeta(2)}$$ and $\gamma$ being the Euler--Mascheroni constant.

When $K\ne 0$ in \eqref{eqn:div-restricted}, the problem is equivalent to estimating the shifted convolution of the restricted divisor function $\tau_H$. Another possible interpretation of this problem is counting $2\times 2$ (positive) integer matrices with a given determinant $\Delta$ whose entries are bounded above by $H$. In this setup, the problem was considered by the author at~\cite{Afif3} and further improved  in~\cite{CM25,CM26,DHP,GG,GGnew}. The best result as per the time of this preprint comes from~\cite{CM25}, which asserts that a fixed $\Delta\ne 0$ and as $H\to \infty$, the expression~\eqref{eqn:div-restricted} is asymptotically equal to \begin{align*}\dfrac{12}{\pi^2} \dfrac{\sigma(|\Delta|)}{|\Delta|} H^2+O(H^{1+o(1)}).
\end{align*}

Furthermore, when $n\ne 0$, the problem in~\eqref{eqn:div-restricted} can be further generalised to ~\eqref{eqn:integers} counting solutions to the system of equations \[
x_{1,1}\dots x_{1,m} =  \cdots = x_{k,1}\dots x_{k,m}
\] for integers $k,m\ge 2$, with $|x_{i,j}|\le H$ for all $i=1,\dots,k$, $j=1,\dots,m$. This setup can be  be interpreted as deriving asymptotics of the higher moments of some higher-dimensional restricted divisor functions. This family of equations was considered by Munsch--Shparlinski~\cite{MS} for the case $k=2$, Mastrostefano~\cite{M2} for $m=2$, and a recent work of the author and Corrigan~\cite{Afif4} for general $k,m\ge 2$.

\subsection{Motivation over function fields} 
We now provide some function field analogues of these divisor functions, which we define shortly. 
We first introduce some related notations. Let $\cM$  denote the set of all monic polynomials in $\mathbb{F}_q[T]$. Furthermore, let $\F_q[T]_{\le n}$ and $\F_q[T]_{n}$ be the sets of polynomials in $\F_q[T]$ of degree at most $n$ and exactly $n$, respectively. We define $\cM_{\le n}$  and $\cM_{n}$  analogously. Note, in particular, that the zero polynomial belongs to  $\F_q[T]_{\le n}$.

In the function fields setup, we consider the equation
 \begin{align}\label{eqn:multg}
f_1f_2-f_3f_4=g,
\end{align} over the ring of polynomials $\F_q[T]$, for a prime power $q$ and a fixed polynomial $g\in \F_q[T]$. In particular, we are interested in counting solutions to~\eqref{eqn:multg} where the degrees of the variables are restricted with respect to some parameters.

We consider the following two different models of counting solutions of~\eqref{eqn:multg}, as analogues of the integer cases discussed previously:
\begin{itemize}  \item $\deg f_1f_2,\deg f_3f_4 \le \delta$, (corresponds to the divisor function over integers $\tau$),  \item $\deg f_1,\deg f_2,\deg f_3,\deg f_4\le \delta$ (corresponds to the restricted divisor function over integers $\tau_H$).

\end{itemize}
There have been extensive studies in the literature on the problem of counting solutions to~\eqref{eqn:multg} with respect to the first model. This model is naturally related to the 
divisor function over $\F_q[T]$, defined as \[
\tau_k(f) \coloneqq \# \{(f_1,\dots,f_k) \in \cM^k \colon f=f_1\dots f_k \} \quad \text{for } f\in \cM.\] 
We  mostly consider the case $k=2$ in this work, where we drop the subscript $k$.

There have been many recent works surrounding the  statistics of $\tau_k(f)$, in particular for finding analogous results of~\eqref{eqn:momcorZ} over function fields.
For example, Andrade--Bary-Soroker--Rudnick~\cite{ABSR}  proved in~\cite[Theorem 1.1]{ABSR} the following asympotical formula that holds for a fixed $n$ and as $q\to \infty$, uniformly over nonzero $K \in \F_q[T]$ with $\deg K<n$: 
 \begin{equation} \label{eqn:tauk}
\sum_{f\in \cM_n} \tau_k(f)\tau_k(f+K) = \binom{n+k-1}{k-1}^2q^n + O(q^{n-1/2}) .
\end{equation}  
Furthermore, an exact formula for $k=2$ and $K\ne 0$ has been derived in various ways previously, see Conrey (unpublished), Florea--Lalin--Malik--Sahay~\cite{FLMS}, Gorodetsky~\cite{Gor}, Woo~\cite{Woo} and Yiasemides~\cite{Yia}. Note that the case $k=2$ and $K=0$ is immediate by considering the following standard generating function for $\tau$: \begin{align*}
    \sum_{f\in \cM} \tau(f)^2 u^{\deg f} = \dfrac{1-qu^2}{(1-qu)^4},
\end{align*}
however we provide an alternative proof in Theorem~\ref{thm:main1f}.

 Furthermore, such summation was also considered over short intervals, as seen in Keating--Rodgers--Roditty-Gershon--Rudnick~\cite{KRRR} and Kuperberg--Lalin~\cite{KL}. Yet another arithmetical sum problem over $\tau_k(f)$ was also studied in~\cite{ASS}. 

Our main goal of this paper is to obtain similar results over the second model. In particular, we aim to obtain similar result as in~\eqref{eqn:tauk} for the case $k=2$. As far as the author's knowledge at the time of this publication, this is the first such work on such setup. The related results will be discussed in the next section.

\subsection{Notation and conventions}
For a polynomial $g$, we often denote $\deg g=\delta_g$ as the degree of $g$. 
We use Landau big-O notation to describe the order of an expression. For $U$ and $V$ tending to infinity with respect to some variable(s) $x$, we use $U=O(V)$ if $|U|\le c V$ for some positive constant $c$ which may depend on the dimension variables (usually $m$ and $n$), which are fixed. Note that when $x$ represents some independent variables, we usually consider all variables tending to infinity, unless stated otherwise.
 We also use the notation
 $U\sim V$  if and only if $\lim\limits_{U,V\to \infty} U/V \to 1$. We also write $X^{o(1)}$ for an expression in $X$ whose exponent tends to 0 as $X \to \infty$.
\section{Main results}

\subsection{Results for $g=0$}
Returning to the equation~\eqref{eqn:multg},
we first discuss our results for the case $g=0$, where the problem becomes counting solutions over $\F_q[T]$ to \begin{equation}\label{eqn:multen}
    f_1f_2=f_3f_4.
\end{equation}

For the restricted divisor model, we consider the problem of deriving formulas for the following quantities: \begin{align*}
&\energy(\F_q[T]_{\le\delta}) \coloneqq \#\{ (f_1,f_2,f_3,f_4)\in \F_q[T]_{\le \delta}^4 \colon f_1f_2=f_3f_4\}, \\
&\energy( \cM_{\le \delta} ) \coloneqq \#\{ (f_1,f_2,f_3,f_4)\in \cM_{\le \delta}^4 \colon f_1f_2=f_3f_4\},
\end{align*}  for integer $\delta\ge 0$.
Note that for $\energy(\F_q[T]_{\le\delta}) $, we also include constant polynomials (of degree $0$) and zero polynomial (defined as of degree $-\infty$).

We first provide a heuristic for the expected order of $\energy(\F_q[T]_{\le\delta})$. Fix the polynomials $f_1,f_2$, which can be done in $(q^{\delta+1})^2=q^{2\delta+2}$ ways. For each such choice, we note that the tuples $(f_1,f_2,\alpha f_1,\alpha^{-1}f_2)$ is a solution of~\eqref{eqn:multen}, for any nonzero $\alpha \in \F_q$. Since there are $q-1$ choices of $\alpha$, we expect the quantity $\energy(\F_q[T]_{\le\delta})$ to be in the order of $q^{2\delta+3}$.

We confirm this heuristic in the following result.

\begin{theorem}\label{thm:maininf}
    We have \begin{align*}
    &\energy(\F_q[T]_{\le\delta})= (\delta+1)q^{2\delta+3}+q^{2\delta+2}-(\delta+1)q^{2\delta+1},\\
   & \energy( \cM_{\le \delta} ) = \dfrac{\energy(\F_q[T]_{\le\delta}) - (2q^{\delta+1}-1)^2}{(q-1)^3}.
    \end{align*}
\end{theorem}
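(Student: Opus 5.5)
I would handle $\energy(\F_q[T]_{\le\delta})$ in two parts: first the solutions of $f_1f_2=f_3f_4$ in which some variable is zero, then the solutions with all $f_i\neq0$, which I reduce to the monic count. The monic count itself I would compute through a gcd parametrisation.

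\emph{Zero solutions.} If $f_1f_2=0$ then $f_3f_4=0$, and conversely. The number of pairs in $\F_q[T]_{\le\delta}^2$ with product zero is $2q^{\delta+1}-1$. Hence these solutions contribute exactly $(2q^{\delta+1}-1)^2$.

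\emph{Reduction to monic polynomials.} Write each nonzero $f_i=c_i m_i$ with $c_i\in\F_q^*$ and $m_i$ monic of the same degree. The equation becomes $c_1c_2=c_3c_4$ together with $m_1m_2=m_3m_4$. The scalar condition has $(q-1)^3$ solutions. It follows that
\[
\energy(\F_q[T]_{\le\delta})=(2q^{\delta+1}-1)^2+(q-1)^3\,\energy(\cM_{\le\delta}),
\]
which is the second formula of the theorem. So it suffices to compute $\energy(\cM_{\le\delta})$ and check the first formula.

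\emph{Monic count.} Since $\F_q[T]$ is a UFD, every monic solution of $m_1m_2=m_3m_4$ has the unique form
\[
m_1=ab,\quad m_2=cd,\quad m_3=ac,\quad m_4=bd,
\]
with $a=\gcd(m_1,m_3)$ and $\gcd(b,c)=1$, all monic. The degree constraints are $\deg a+\deg b,\ \deg c+\deg d,\ \deg a+\deg c,\ \deg b+\deg d\le\delta$. The coprimality I would remove by Möbius inversion over $e=\gcd(b,c)$, or equivalently by noting that the number of coprime monic pairs of degrees $(i,j)$ is $q^{i+j}(1-1/q)$ when $i,j\ge1$, and $q^{i+j}$ when $\min(i,j)=0$. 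Then $\energy(\cM_{\le\delta})$ is a finite sum over degrees $\alpha,\beta,\gamma,\epsilon$ of $a,b,c,d$ of $q^{\alpha+\epsilon}$ times the coprime count for $(\beta,\gamma)$.

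To evaluate it, I fix $\beta,\gamma$. Then $\alpha\le\delta-\max(\beta,\gamma)$ and $\epsilon\le\delta-\max(\beta,\gamma)$, so the $(\alpha,\epsilon)$ sum is the square of a geometric series:
\[
\left(\frac{q^{\delta-M+1}-1}{q-1}\right)^2,\qquad M=\max(\beta,\gamma).
\]
Next I group the pairs $(\beta,\gamma)$ by $M$. The number of monic coprime pairs with $\max(\deg b,\deg c)=M$ has a closed form: it is $1$ for $M=0$ and $2q^M\cdot q^{M}(1-1/q)\cdot$(geometric correction) for $M\ge1$. Summing over $M$ from $0$ to $\delta$ gives a closed form.

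The main obstacle is this final telescoping of the double geometric sum. The term that is linear in $\delta$ arises because the weight for each $M$ roughly balances $q^{2(\delta-M)}$ against $q^{2M}$, giving about $\delta+1$ equal contributions of size $q^{2\delta}$. Tracking the lower-order terms is what must produce exactly $(\delta+1)q^{2\delta+3}+q^{2\delta+2}-(\delta+1)q^{2\delta+1}$ after multiplying by $(q-1)^3$ and adding $(2q^{\delta+1}-1)^2$. I would check the result against $\delta=0$: the total should be $q^3+q^2-q$, matching $(q-1)^3+(2q-1)^2$. I would also use generating functions in $u=q^{-1}$ to organise the sums if the direct manipulation becomes messy.
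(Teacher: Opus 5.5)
Your proposal follows the paper's proof essentially step for step. Zero solutions give $(2q^{\delta+1}-1)^2$; your observation that $f_1f_2=0$ if and only if $f_3f_4=0$ gets there faster than the paper's casework. Leading coefficients give the factor $(q-1)^3$. The four-number parametrization together with Proposition~\ref{prp:relprim} turns $\energy(\cM_{\le\delta})$ into $\sum_{M=0}^{\delta}N(M)\bigl(\frac{q^{\delta+1-M}-1}{q-1}\bigr)^2$, where $N(M)$ counts coprime monic pairs $(b,c)$ with $\max(\deg b,\deg c)=M$.

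The one piece left imprecise is your closed form for $N(M)$. It is exactly $N(0)=1$ and $N(M)=q^{2M}+q^{2M-1}$ for $M\ge1$. With this, the remaining term $(q^2-1)\sum_{M=1}^{\delta}(q^{2\delta+1}-2q^{\delta+M}+q^{2M-1})$ is an elementary geometric sum that yields the stated formula, as in the paper.
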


In a related setup, we also consider the problem of counting solutions to~\eqref{eqn:multen} where we fix the degree of each polynomials. For integers $\delta_1,\delta_2,\delta_3,\delta_4\ge 0$, we let  \begin{align*}
&\energy^{\cF} (\F_q[T];\delta_1,\delta_2,\delta_3,\delta_4) \coloneqq \# \{ (f_1,f_2,f_3,f_4)\in \F_q[T]^4 \colon f_1f_2=f_3f_4, f_i \in \F_q[T]_{\delta_i} \text{ for }i=1,2,3,4\},\\
&\energy^{\cF}  (\cM;\delta_1,\delta_2,\delta_3,\delta_4) \coloneqq \# \{ (f_1,f_2,f_3,f_4)\in \cM^4 \colon f_1f_2=f_3f_4, f_i \in \cM_{\delta_i} \text{ for }i=1,2,3,4\}.
\end{align*} where $\delta_1,\delta_2,\delta_3,\delta_4\ge 0$. 
Indeed, in order for the above expressions to be nonzero, we may let $\delta_1+\delta_2=\delta_3+\delta_4=\delta$. The following result provides a formula for these quantities.

\begin{theorem}\label{thm:mainfixed}
Let $\delta_1,\delta_2,\delta_3,\delta_4\ge 0$ be integers with $\delta_1+\delta_2=\delta_3+\delta_4=\delta$. Define $\min(\delta_1,\delta_2,\delta_3,\delta_4)=\delta_{\min}$. Then, \begin{align*}
&\energy^{\cF}  (\cM;\delta_1,\delta_2,\delta_3,\delta_4)= \begin{cases}1, & \text{when } \delta=0,\\
      (\delta_{\min}+1)q^{\delta}-\delta_{\min}q^{\delta-1}, & \text{when } \delta>0,
  \end{cases}\\
&  \energy^{\cF} (\F_q[T];\delta_1,\delta_2,\delta_3,\delta_4) = (q-1)^3\energy^{\cF}  (\cM;\delta_1,\delta_2,\delta_3,\delta_4).
\end{align*}
\end{theorem}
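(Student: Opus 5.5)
We reduce to the monic count and then parametrize the solutions of $f_1f_2=f_3f_4$. For the second identity, note that every nonzero polynomial of degree $\delta_i$ is uniquely $c_i m_i$ with $c_i\in\F_q^*$ and $m_i\in\cM_{\delta_i}$. Thus $f_1f_2=f_3f_4$ holds if and only if $m_1m_2=m_3m_4$ and $c_1c_2=c_3c_4$. The constant equation has $(q-1)^3$ solutions, since $c_1,c_2,c_3$ are free and $c_4$ is determined. This gives $\energy^{\cF}(\F_q[T];\delta_1,\ldots,\delta_4)=(q-1)^3\energy^{\cF}(\cM;\delta_1,\ldots,\delta_4)$.

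For the monic count we use the standard parametrization. Given monic $m_1m_2=m_3m_4$, set $a=\gcd(m_1,m_3)$ and write $m_1=ab$, $m_3=ac$ with $\gcd(b,c)=1$. Then $bm_2=cm_4$, so $c\mid m_2$. Writing $m_2=cd$ forces $m_4=bd$. The map $(a,b,c,d)\mapsto(ab,cd,ac,bd)$ is therefore a bijection from monic quadruples with $\gcd(b,c)=1$ onto the solution set. Set $\alpha=\deg a$, $\beta=\deg b$, $\gamma=\deg c$ and $\eta=\deg d$. The constraints are $\alpha+\beta=\delta_1$, $\gamma+\eta=\delta_2$, $\alpha+\gamma=\delta_3$ and $\beta+\eta=\delta_4$. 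Once $\alpha$ is chosen, all four degrees are determined: $\beta=\delta_1-\alpha$, $\gamma=\delta_3-\alpha$ and $\eta=\delta_2-\delta_3+\alpha$. Nonnegativity makes $\alpha$ range over an interval. A short check shows that this interval has length $\delta_{\min}+1$, and that along it the pair $(\beta,\gamma)$ runs through $\{(\delta_{\min}-j+s,\,j+t)\}$ in some arrangement with one of $\beta,\gamma$ hitting $0$ at an endpoint.

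The count then becomes
\[
\sum_{\alpha} q^{\alpha}q^{\eta}\,P(\beta,\gamma),
\]
where $P(\beta,\gamma)$ denotes the number of coprime pairs of monic polynomials of degrees $\beta$ and $\gamma$. We use the classical formula: $P(\beta,\gamma)=q^{\beta+\gamma}-q^{\beta+\gamma-1}$ if $\beta,\gamma\ge1$, and $P(\beta,\gamma)=q^{\beta+\gamma}$ if $\min(\beta,\gamma)=0$. It follows from the Möbius-type identity $q^{\beta+\gamma}=\sum_{k}\#\cM_k\,P(\beta-k,\gamma-k)$, or equivalently from the generating function $\sum P(\beta,\gamma)x^\beta y^\gamma=(1-qxy)/((1-qx)(1-qy))$.

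Since $\alpha+\beta+\gamma+\eta=\delta$, every term has size $q^{\delta}$ times the factor $1$ or $(1-q^{-1})$. Hence the count equals $(\delta_{\min}+1)q^\delta-N q^{\delta-1}$, where $N$ is the number of $\alpha$ with $\beta,\gamma\ge1$. This leaves the combinatorial check that exactly one admissible $\alpha$ has $\min(\beta,\gamma)=0$, so that $N=\delta_{\min}$, whenever $\delta>0$. For $\delta=0$ the only quadruple is $(1,1,1,1)$.

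The main obstacle is verifying this endpoint claim in all cases. Possibly both $\beta=0$ and $\gamma=0$ occur at different endpoints, which would break the formula, so we check it directly. Since $\beta+\gamma=\delta_1+\delta_3-2\alpha$, $\beta$ vanishes at $\alpha=\delta_1$ and $\gamma$ vanishes at $\alpha=\delta_3$. The admissible range is $\max(0,\delta_3-\delta_2)\le\alpha\le\min(\delta_1,\delta_3)$, so only the upper endpoint $\alpha=\min(\delta_1,\delta_3)$ can produce a zero. It produces exactly one, even when $\delta_1=\delta_3$, where $\beta=\gamma=0$ at the same $\alpha$. Thus $N$ equals the range length minus one, namely $\delta_{\min}$. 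We must also confirm that the range length is $\min(\delta_1,\delta_3)-\max(0,\delta_3-\delta_2)+1=\delta_{\min}+1$, using $\delta_1+\delta_2=\delta_3+\delta_4$. This is a case check on which $\delta_i$ is minimal.
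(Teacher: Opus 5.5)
Your proposal is correct and follows the paper's proof. Your $(a,b,c,d)$ is exactly the paper's $(g_1,g_2,h_1,h_2)$ from Proposition~\ref{prp:fournumber}, and you sum $q^{\deg a+\deg d}$ times the coprime-pair count $F(\deg b,\deg c)$ over the single free degree, as the paper does over $\deg h_1$; the only difference is that you check the range length $\delta_{\min}+1$ and the unique index with $\min(\beta,\gamma)=0$ directly for arbitrary $\delta_i$, whereas the paper first reduces by symmetry to $\delta_1\ge\delta_3\ge\delta_4\ge\delta_2$ (your check is right, though your earlier description $(\delta_{\min}-j+s,\,j+t)$ is inaccurate, since $\beta$ and $\gamma$ both decrease as $\alpha$ increases; that description is not used).
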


In addition, our arguments can also be applied to derive similar results over the classical divisor function model in $\F_q[T]$. We let
\begin{align*}
&\denergy^{f} (\F_q[T]_{\delta}) \coloneqq \#  \cE^{f,1} \{ (f_1,f_2,f_3,f_4)\in \F_q[T] \colon f_1f_2=f_3f_4 \in \F_q[T]_\delta\},\\
&\denergy^{f} (\cM_{\delta}) \coloneqq \#  \cE^{f,1} \{ (f_1,f_2,f_3,f_4)\in \cM \colon f_1f_2=f_3f_4 \in \cM_\delta\},
\end{align*}
and recover the following formulas for these quantities.
\begin{theorem}\label{thm:main1f}
    We have \begin{align*}
    &\denergy^{f} (\cM_{\delta}) = \begin{cases} 1,  & \text{when } \delta=0,\\
       \displaystyle  \binom{\delta+3}{3} q^{\delta} - \binom{\delta+1}{3}q^{\delta-1}, & \text{when } \delta\ge 1,
    \end{cases}\\
    &\denergy^{f} (\F_q[T]_{\delta}) = (q-1)^3 \denergy^{f} (\cM_{\delta}).
    \end{align*}
\end{theorem}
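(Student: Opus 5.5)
The plan is to reduce Theorem~\ref{thm:main1f} to Theorem~\ref{thm:mainfixed} by summing over the possible degree patterns. Every solution of $f_1f_2=f_3f_4\in\cM_\delta$ with monic $f_i$ has well-defined degrees $\delta_1+\delta_2=\delta_3+\delta_4=\delta$. Hence
\[
\denergy^{f}(\cM_\delta)=\sum_{\delta_1=0}^{\delta}\sum_{\delta_3=0}^{\delta}\energy^{\cF}(\cM;\delta_1,\delta-\delta_1,\delta_3,\delta-\delta_3).
\]
For $\delta=0$ the only term is $(0,0,0,0)$, which gives $1$ by Theorem~\ref{thm:mainfixed}.

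For $\delta\ge1$, Theorem~\ref{thm:mainfixed} turns each summand into $(m+1)q^\delta-mq^{\delta-1}$, where $m=\min(\delta_1,\delta-\delta_1,\delta_3,\delta-\delta_3)$. Put $a=\min(\delta_1,\delta-\delta_1)$ and $b=\min(\delta_3,\delta-\delta_3)$, so that $m=\min(a,b)$. It then suffices to compute the two sums
\[
S_0=\sum_{\delta_1,\delta_3}1=(\delta+1)^2,\qquad S_1=\sum_{\delta_1,\delta_3}\min(a,b).
\]
We want to verify
\[
(\delta+1)^2+S_1=\binom{\delta+3}{3}\quad\text{and}\quad S_1=\binom{\delta+1}{3}.
\]
These are consistent, since $\binom{\delta+3}{3}-\binom{\delta+1}{3}=(\delta+1)^2$.

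To compute $S_1$, write $\min(a,b)=\sum_{j\ge1}\mathbf 1[a\ge j]\,\mathbf 1[b\ge j]$. The number of $\delta_1\in[0,\delta]$ with $\min(\delta_1,\delta-\delta_1)\ge j$ is $N_j=\max(\delta+1-2j,0)$. Therefore
\[
S_1=\sum_{j\ge1}N_j^2=\sum_{j\ge1}(\delta+1-2j)_+^2 .
\]
This is the sum of squares of the integers in $[1,\delta-1]$ having the same parity as $\delta-1$. That parity-restricted sum of squares equals $\binom{\delta+1}{3}$, which can be checked by induction: the step from $\delta$ to $\delta+2$ adds $(\delta+1)^2$, and $\binom{\delta+3}{3}-\binom{\delta+1}{3}=(\delta+1)^2$. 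The base cases are $\delta=1$ and $\delta=2$, giving $0$ and $1$, as they should. Combining, $\denergy^f(\cM_\delta)=\binom{\delta+3}{3}q^\delta-\binom{\delta+1}{3}q^{\delta-1}$.

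For the second formula, every nonzero polynomial factors uniquely as $c\cdot(\text{monic})$ with $c\in\F_q^*$. So a solution over $\F_q[T]$ with product of degree exactly $\delta$ corresponds to a monic solution together with leading coefficients $c_1,c_2,c_3,c_4$ satisfying $c_1c_2=c_3c_4$. That constraint has $(q-1)^3$ solutions, which gives the factor $(q-1)^3$. Here I read $\denergy^f(\F_q[T]_\delta)$ as counting nonzero $f_i$, since a product of exact degree $\delta$ excludes the zero polynomial. Alternatively, the same factor follows by summing the second identity of Theorem~\ref{thm:mainfixed} over degree patterns.

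The only real obstacle is the combinatorial identity for $S_1$, and it is handled by the layer-cake decomposition above. If one preferred not to rely on Theorem~\ref{thm:mainfixed}, the fallback is the generating function $\frac{1-qu^2}{(1-qu)^4}$ quoted in the introduction. Its coefficient of $u^\delta$ is $\binom{\delta+3}{3}q^\delta-\binom{\delta+1}{3}q^{\delta-1}$, which gives the same result directly.
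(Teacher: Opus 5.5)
Your proposal is correct and follows the paper's route: normalize leading coefficients to get the factor $(q-1)^3$, then sum Theorem~\ref{thm:mainfixed} over all degree patterns with $\delta_1+\delta_2=\delta_3+\delta_4=\delta$. The only difference is how you evaluate $\sum\min(\delta_1,\delta_2,\delta_3,\delta_4)$. The paper counts the number $g(i)$ of patterns whose minimum is exactly $i$ and then sums separately for odd and even $\delta$. Your identity $\min(a,b)=\sum_{j\ge1}\mathbf{1}[a\ge j]\,\mathbf{1}[b\ge j]$ instead factors over the two independent pairs, giving $S_1=\sum_{j\ge1}(\delta+1-2j)_+^2=\binom{\delta+1}{3}$ with no parity split.
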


Akin to Theorem~\ref{thm:maininf}, we also consider a bounded version of the previous model, by considering \begin{align*}
&\denergy^{f} (\F_q[T]_{\le\delta}) \coloneqq \#  \cE^{f,1} \{ (f_1,f_2,f_3,f_4)\in \F_q[T] \colon f_1f_2=f_3f_4 \ne 0 \in \F_q[T]_{\le\delta}\},\\
&\denergy^{f} (\cM_{{\le\delta}}) \coloneqq \#  \cE^{f,1} \{ (f_1,f_2,f_3,f_4)\in \cM \colon f_1f_2=f_3f_4 \in \cM_{\le\delta}\}.
\end{align*} Note that in this case, we exclude the cases of zero polynomials for $\F_q[T]$. 
For the sake of completeness, we state the following formulas for these quantities as a corollary to Theorem~\ref{thm:main1f}.
\begin{corollary}\label{cor:main1}
  We have \begin{align*}
    &\denergy^{f} (\cM_{\le\delta}) =\binom{\delta+3}{3}q^{\delta}  + \sum_{i=0}^{\delta-1}\binom{i+2}{2}q^i,\\
    &\denergy^{f} (\F_q[T]_{\le\delta}) = (q-1)^3 \denergy^{f} (\cM_{\le \delta}).
    \end{align*}
\end{corollary}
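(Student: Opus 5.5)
The plan is to write $\denergy^{f} (\cM_{\le\delta})$ as a sum over the exact degree of the common product and apply Theorem~\ref{thm:main1f} term by term. Since the condition $f_1f_2=f_3f_4\in\cM_{\le\delta}$ partitions according to the degree $d=\deg(f_1f_2)\in\{0,\dots,\delta\}$, we have
\[
\denergy^{f} (\cM_{\le\delta})=\sum_{d=0}^{\delta}\denergy^{f} (\cM_{d}) = 1+\sum_{d=1}^{\delta}\left(\binom{d+3}{3}q^{d}-\binom{d+1}{3}q^{d-1}\right).
\]
The remaining work is to reorganise this sum by powers of $q$. For $d=\delta$ we keep the term $\binom{\delta+3}{3}q^\delta$ exactly as it is. For $0\le i\le \delta-1$, the coefficient of $q^i$ collects $\binom{i+3}{3}$ from the positive term with $d=i$; this includes $i=0$, where the value $1=\binom{3}{3}$ is consistent. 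It also collects $-\binom{i+2}{3}$ from the negative term with $d=i+1$. Pascal's rule gives $\binom{i+3}{3}-\binom{i+2}{3}=\binom{i+2}{2}$, which is exactly the claimed formula. The negative term with $d=1$ vanishes, since $\binom{2}{3}=0$, so the boundary causes no trouble. This telescoping and index bookkeeping is the only step that needs any care.

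For the $\F_q[T]$ version, the condition $f_1f_2=f_3f_4\ne0$ with the product in $\F_q[T]_{\le\delta}$ likewise splits by the exact degree $d\in\{0,\dots,\delta\}$ of the nonzero product. We then use the second identity of Theorem~\ref{thm:main1f} for each $d$:
\[
\denergy^{f} (\F_q[T]_{\le\delta})=\sum_{d=0}^{\delta}\denergy^{f} (\F_q[T]_{d})=(q-1)^3\sum_{d=0}^{\delta}\denergy^{f} (\cM_{d})=(q-1)^3\denergy^{f} (\cM_{\le\delta}).
\]
Excluding the zero polynomial is exactly what makes this decomposition exhaustive, since every nonzero product has a well-defined degree in $\{0,\dots,\delta\}$.
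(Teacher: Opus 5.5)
Your proposal is correct and follows the paper's proof: you split by the exact degree, apply Theorem~\ref{thm:main1f} term by term, and telescope using $\binom{i+3}{3}-\binom{i+2}{3}=\binom{i+2}{2}$, with the same boundary bookkeeping at $i=0$ and $i=\delta$. For the $\F_q[T]$ identity, you sum the second part of Theorem~\ref{thm:main1f} over $d$, which fills in a step the paper leaves implicit.
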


\subsection{Results for $g\ne 0$}We now consider the problem of counting the number of solutions to \eqref{eqn:multg} for a fixed $g\ne 0$. We first discuss the restricted divisor function analogue, where we consider~\eqref{eqn:multg} with $\deg f_i\le \delta$, $\deg g=\delta_g<2\delta$. 
We let \begin{align*}
&\aenergy( \cM_{\le \delta} ;g) \coloneqq \#\{ (f_1,f_2,f_3,f_4)\in \cM_{\le \delta}^4\colon f_1f_2-f_3f_4=g \}.
\end{align*} 
Note that we only consider monic polynomial solutions in this part, since the arguments to obtain non-monic solutions will be similar. We now provide the following asymptotic formula. \begin{theorem}\label{thm:detgen}
  Let $g\in \F_q[T]$ be a nonzero nonconstant polynomial. Fix an integer $\delta>\delta_g$. Then, as $q \to \infty$, we have \[
    \aenergy(\cM_{\le\delta};g)= q^{2\delta}+O(q^{2\delta-1}).
    \]
\end{theorem}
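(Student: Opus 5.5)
The plan is to isolate the single degree configuration $\deg f_1=\deg f_2=\deg f_3=\deg f_4=\delta$, which should give the main term $q^{2\delta}$, and to show that every other configuration contributes only $O(q^{2\delta-1})$. The main term will come from solving the equation as a linear congruence in $(f_2,f_4)$ for fixed coprime $(f_1,f_3)$. Throughout, implied constants may depend on $\delta$ (and hence on $\delta_g<\delta$), but not on $q$.

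\emph{Step 1: degenerate configurations.} Suppose $\deg(f_3f_4)<2\delta$. There are at most $(\delta+1)^2q^{\deg(f_3f_4)}\le(\delta+1)^2 q^{2\delta-1}$ choices of monic $(f_3,f_4)$ with this property. For each such choice the product $f_1f_2=f_3f_4+g$ is determined. It is nonzero, since $f_1,f_2$ are monic, and it has degree at most $2\delta$. A nonzero polynomial of degree at most $2\delta$ has at most $2^{2\delta}$ monic divisors, so the pair $(f_1,f_2)$ can be chosen in at most $2^{2\delta}$ ways. These configurations therefore contribute $O(q^{2\delta-1})$.

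The case $\deg(f_1f_2)<2\delta$ is handled the same way after swapping the roles of the pairs $(f_1,f_2)$ and $(f_3,f_4)$. Since all $f_i$ have degree at most $\delta$, the only remaining configuration is the one with all four degrees equal to $\delta$.

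\emph{Step 2: main term from coprime $(f_1,f_3)$.} Now fix monic $f_1,f_3$ of degree $\delta$ with $\gcd(f_1,f_3)=1$. The equation forces $f_1f_2\equiv g \pmod{f_3}$, so $f_2$ lies in a single residue class modulo $f_3$. Writing $f_2=f_3+r$ with $\deg r<\delta$, each residue class modulo $f_3$ contains exactly one monic polynomial of degree $\delta$, so there is exactly one admissible $f_2$.

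For this $f_2$, set $f_4=(f_1f_2-g)/f_3$, which is a polynomial by the congruence. Since $\deg g<2\delta$, the polynomial $f_1f_2-g$ is monic of degree $2\delta$, so $f_4$ is automatically monic of degree $\delta$. Hence each coprime pair gives exactly one solution.

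The standard count of coprime pairs of monic polynomials of degree $\delta\ge 1$ is $q^{2\delta}-q^{2\delta-1}$. This gives the main term $q^{2\delta}+O(q^{2\delta-1})$.

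\emph{Step 3: non-coprime $(f_1,f_3)$.} This is the step needing some care. Let $d=\gcd(f_1,f_3)$ with $\deg d=e\ge 1$. Any solution forces $d\mid g$, and $g$ has at most $2^{\delta_g}$ monic divisors. For a fixed such $d$, the number of pairs $(f_1,f_3)$ with this gcd is at most $q^{2(\delta-e)}$.

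For each such pair, write $f_1=df_1'$ and $f_3=df_3'$, and divide the equation by $d$. We obtain $f_1'f_2-f_3'f_4=g/d$ with $f_1',f_3'$ coprime of degree $\delta-e$. Then $f_2$ is determined modulo $f_3'$, which leaves $q^{e}$ monic choices of degree $\delta$, and each choice determines $f_4$. The total over this $d$ is at most $q^{2(\delta-e)}\cdot q^{e}=q^{2\delta-e}\le q^{2\delta-1}$. Summing over the boundedly many divisors $d$ of $g$ gives $O(q^{2\delta-1})$.

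Combining Steps 1–3 yields $\aenergy(\cM_{\le\delta};g)=q^{2\delta}+O(q^{2\delta-1})$. I expect the only delicate point to be the uniformity in Steps 1 and 3: one must check that the divisor-count bounds ($2^{2\delta}$ and $2^{\delta_g}$) depend only on degrees and not on $q$, which is exactly what is needed for the error term to stay $O(q^{2\delta-1})$ as $q\to\infty$.
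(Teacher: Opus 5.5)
Your proof is correct. It obtains the main term the same way the paper does but handles the lower-order contributions by a different, more economical route.

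In both arguments the main term comes from the configuration $(\delta,\delta,\delta,\delta)$. A fixed coprime pair of monic degree-$\delta$ polynomials admits exactly one completion: you see this through the congruence $f_2\equiv gf_1^{-1}\pmod{f_3}$, the paper through the Bezout parametrisation of Proposition~\ref{prp:72}. One then multiplies by $F(\delta,\delta)=q^{2\delta}-q^{2\delta-1}$.

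The two proofs differ in everything else. The paper first splits by $g_2=\gcd(f_2,f_4)\mid g$ and passes to the equation with right-hand side $g/g_2$. It then computes the coprime count for \emph{every} degree configuration via Propositions~\ref{prp:73} and~\ref{prp:73g}, the latter covering the boundary case $\deg(f_1f_2)=\delta_g$, and reads off the leading coefficient only at the end. You instead dispose of all configurations other than $(\delta,\delta,\delta,\delta)$ at once with the divisor bound $\tau(F)\le 2^{\deg F}$. You then bound the non-coprime part of the top configuration crudely by $q^{2\delta-e}$ for each divisor $d$ of $g$ of degree $e\ge 1$.

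Your version has several advantages:
\begin{itemize}
\item It is shorter, self-contained, and visibly uniform in $g$, with constants depending only on $\delta$.
\item It needs neither the boundary Proposition~\ref{prp:73g} nor the exact per-configuration counts with their normalisation $\delta_1\ge\delta_2$, $\delta_3\ge\delta_4$.
\item It goes through unchanged under the weaker hypothesis $\delta_g<2\delta$ in place of $\delta_g<\delta$.
\end{itemize}

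What the paper's exact bookkeeping is meant to buy is information about lower-order terms. The same machinery is what produces the closed formula of Theorem~\ref{thm:detconstant} for constant $g$, whereas your crude bounds give nothing beyond the leading term.
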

However, when $g=\Delta \in \F_q^*$, we instead obtain the following exact formula for $\aenergy(\F_q[T]_{\le\delta};\Delta)$.
\begin{theorem}\label{thm:detconstant}
    Let $g=\Delta\in \F_q^*$. We have \begin{align*}
        \aenergy(\cM_{\le\delta};\Delta)= 
         \displaystyle   \dfrac{q(q^{\delta}-1)(q^{\delta+1}+q^{\delta}+q-3)}{(q-1)^2}.
    \end{align*} 
\end{theorem}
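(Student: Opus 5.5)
The plan is to reduce to a count with all four degrees fixed, in the spirit of Theorem~\ref{thm:mainfixed}, and then sum over degrees. Write $d_i=\deg f_i\le\delta$. Since $f_1f_2$ and $f_3f_4$ are monic and their difference is the nonzero constant $\Delta$, the case $d_1+d_2\neq d_3+d_4$ is impossible. Indeed, the difference would then have degree $\max(d_1+d_2,d_3+d_4)$, so it could be a constant only if both degrees were $0$, in which case it would be $1-1=0$. For the same reason $d_1+d_2=d_3+d_4=d\ge 1$. This already gives $\aenergy(\cM_{\le 0};\Delta)=0$, matching the formula at $\delta=0$. So it suffices to compute
\[
N(d_1,d_2,d_3,d_4)\coloneqq\#\{(f_1,\dots,f_4)\in\cM_{d_1}\times\cM_{d_2}\times\cM_{d_3}\times\cM_{d_4}\colon f_1f_2-f_3f_4=\Delta\}
\]
for $d_1+d_2=d_3+d_4=d\ge1$, and then sum over all such degree vectors with every $d_i\le\delta$.

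For $N$, first normalise using symmetries. Swapping $f_1\leftrightarrow f_2$, or $f_3\leftrightarrow f_4$, preserves the equation. Swapping the pairs $(f_1,f_2)\leftrightarrow(f_3,f_4)$ replaces $\Delta$ by $-\Delta$. The count will turn out to be independent of which nonzero constant appears, so we may assume $d_1=\min_i d_i$. Then $d_2=d-d_1\ge d-d_4=d_3$.

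Next fix $f_1\in\cM_{d_1}$ and $f_3\in\cM_{d_3}$. A solution forces $\gcd(f_1,f_3)\mid\Delta$, so the pair must be coprime. Conversely, for a coprime pair Bézout gives a unique $a$ with $\deg a<d_3$ and $f_1a\equiv\Delta\pmod{f_3}$. Every solution then has the form $f_2=a+hf_3$ and $f_4=(f_1a-\Delta)/f_3+hf_1$ with $h\in\F_q[T]$. Since $d_2\ge d_3>\deg a$, the condition $f_2\in\cM_{d_2}$ holds exactly when $h$ is monic of degree $d_2-d_3$, which gives $q^{d_2-d_3}$ choices. The degree and monicity of $f_4$ then follow automatically by comparing leading terms. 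The case $d_3=0$ is covered too: $f_3=1$, $a=0$, and $f_4$ is determined by $f_2$.

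The number of coprime pairs in $\cM_{d_1}\times\cM_{d_3}$ is $q^{d_1+d_3}-q^{d_1+d_3-1}$ when $d_1,d_3\ge1$, and $q^{d_1+d_3}$ when one of them is $0$. Hence
\[
N=\begin{cases} q^{d}, & \min_i d_i=0,\\ q^{d}-q^{d-1}, & \min_i d_i\ge1,\end{cases}
\]
and in particular $N$ does not depend on $\Delta$, which justifies the normalisation above.

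It remains to sum. For $1\le d\le 2\delta$, let $P(d)$ be the number of pairs $(d_1,d_2)$ with $d_1+d_2=d$ and $0\le d_1,d_2\le\delta$, and let $P_0(d)$ be the number of those pairs with a zero entry. Then $P(d)=\min(d,2\delta-d)+1$, while $P_0(d)=2$ for $1\le d\le\delta$ and $P_0(d)=0$ for $d>\delta$. The number of degree vectors with some zero entry is $P(d)^2-(P(d)-P_0(d))^2$, so
\[
\aenergy(\cM_{\le\delta};\Delta)=\sum_{d=1}^{2\delta}\Big(P(d)^2q^{d}-\big(P(d)-P_0(d)\big)^2q^{d-1}\Big).
\]
Split the sum at $d=\delta$. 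For $d\le\delta$ one has $P=d+1$ and $P-P_0=d-1$; for $d>\delta$ one has $P=P-P_0=2\delta-d+1$, and in that range the two terms combine to $(2\delta-d+1)^2(q^d-q^{d-1})$. Evaluating these sums of the form $\sum d^2q^d$ in closed form and simplifying should give $q(q^\delta-1)(q^{\delta+1}+q^\delta+q-3)/(q-1)^2$. I would check the algebra against $\delta=1$, where the target value is $q(q+1)$.

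I expect the main obstacle to be this final closed-form simplification: the quadratic weights make the geometric sums messy, and it is easy to mis-handle the boundary term $d=\delta$. The structural count of $N$ itself is straightforward once $d_1$ is taken to be the minimal degree.
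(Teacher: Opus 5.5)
Your proposal is correct and follows essentially the paper's route. The per-degree-vector count via Bézout and coprime-pair counting is the paper's Propositions~\ref{prp:72}--\ref{prp:73} with the roles of $(f_1,f_3)$ and $(f_2,f_4)$ relabelled. Your sum $\sum_{d}\bigl(P(d)^2q^d-(P(d)-P_0(d))^2q^{d-1}\bigr)$ coincides term by term with the paper's.

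One correction to your sanity check: at $\delta=1$ the target is $q(q+3)=q^2+3q$, not $q(q+1)$, and your sum does give $4q+(q^2-q)$. To finish, combine adjacent terms. The coefficient of $q^n$ is then $2n+1$ for $1\le n\le\delta$ and $4\delta+1-2n$ for $\delta<n\le 2\delta$. This is exactly the coefficient sequence of $q\,\frac{q^\delta-1}{q-1}\bigl(\frac{q^{\delta+1}-1}{q-1}+\frac{q^\delta-1}{q-1}+1\bigr)$, which equals the stated right-hand side.
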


This result serves as an analogue for~\cite[Theorem 7.1]{ABSR} in our model.

As an addition, we also discuss the application of our arguments for the divisor function model, which is to count solutions to~\eqref{eqn:multg} with $\deg(f_1f_2)=\deg(g_1g_2)=\delta$. We let \begin{align*}
&\aenergy^f( \cM_{\le \delta} ;g) \coloneqq \#\{ (f_1,f_2,f_3,f_4)\in \cM^4\colon f_1f_2-f_3f_4=g, f_1f_2,f_3f_4\in \cM_{\le \delta} \},
\end{align*} and first note that this quantity is exactly $\sum \tau(f)\tau(f+K)$ in~\eqref{eqn:tauk}. By applying our argument in this setup, then obtain the following asymptotical result, which recovers the main term of~\cite{ABSR,FLMS,Gor,Woo,Yia}.

\begin{theorem}\label{thm:detgendiv}
      Let $g\in \F_q[T]$ be a nonzero polynomial and fix an integer $\delta>\delta_g$. Then, as $q\to \infty$,  we have \[
    \aenergy^{f}(\cM_{\le\delta};g)= (\delta+1)^2 q^{\delta}+O(q^{\delta-1}).\]
\end{theorem}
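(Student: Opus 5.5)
We would split the count by the common degree $n=\deg(f_1f_2)=\deg(f_3f_4)$ and then by the individual degrees of the $f_i$. Write $d_i=\deg f_i$, so $d_1+d_2=d_3+d_4=n$. Since all $f_i$ are monic and $\deg g<\delta$, the equation $f_1f_2-f_3f_4=g$ forces $\deg(f_1f_2)=\deg(f_3f_4)$ whenever either product has degree larger than $\delta_g$. There are finitely many configurations with $n\le\delta_g$ (with the two products possibly of different degrees). Each contributes at most $O(q^{\delta_g})$, since $f_1f_2$ determines $f_3f_4$ and the divisor function is bounded in terms of $\delta$. This gives $O(q^{\delta-1})$ because $\delta_g\le\delta-1$. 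The main term should therefore come from $n=\delta$, with the levels $\delta_g<n<\delta$ contributing $O(q^{n})=O(q^{\delta-1})$ by the same argument.

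For a fixed level $n$ and a fixed degree pattern $(d_1,d_2,d_3,d_4)$, we would count the solutions directly rather than invoke~\eqref{eqn:tauk}. The error $O(q^{n-1/2})$ in~\eqref{eqn:tauk} is too weak for the claimed $O(q^{\delta-1})$.
\begin{itemize}
\item Using the symmetry $(f_1,f_2,f_3,f_4)\mapsto(f_2,f_1,f_4,f_3)$, we may assume $d_2\ge d_3$ (equivalently $d_1\le d_4$). This covers all cases, since $d_2<d_3$ forces $d_1>d_4$.
\item Fix monic $f_1\in\cM_{d_1}$ and $f_3\in\cM_{d_3}$. The equation is solvable only if $h=\gcd(f_1,f_3)$ divides $g$.
\item When $\gcd(f_1,f_3)=1$, $f_2$ must satisfy $f_2\equiv g f_1^{-1}\pmod{f_3}$. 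Because $d_2\ge d_3$, exactly $q^{d_2-d_3}$ monic polynomials of degree $d_2$ lie in this residue class.
\item For each such $f_2$, the polynomial $f_4=(f_1f_2-g)/f_3$ is a polynomial. It is automatically monic of degree $d_4$, since $\deg g<n$ and the leading terms cancel correctly.
\end{itemize}
The number of coprime monic pairs $(f_1,f_3)$ of degrees $d_1,d_3$ is $q^{d_1+d_3}(1-q^{-1})$ when $d_1,d_3\ge1$, and exactly $q^{d_1+d_3}$ when one of them is $0$. In either case this pattern contributes $q^{d_1+d_3}\,q^{d_2-d_3}(1+O(q^{-1}))=q^{n}+O(q^{n-1})$.

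The non-coprime pairs also need bounding. Here $h\mid g$ is nonconstant, and there are $O(1)$ choices of $h$ because $g$ is fixed. Dividing through by $h$ reduces the problem to the same count with $g/h$ and degrees lowered by $\deg h$. One must check that $h\mid f_1f_2-f_3f_4$ imposes no further condition, since $h$ divides both $f_1$ and $f_3$. This gives $O(q^{d_1+d_3-\deg h}q^{d_2-d_3})=O(q^{n-1})$.

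Summing over the $(n+1)^2$ degree patterns at level $n=\delta$ gives $(\delta+1)^2q^\delta+O(q^{\delta-1})$. The lower levels add only $O(q^{\delta-1})$, since $\delta$ is fixed while $q\to\infty$.

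The main obstacle is the careful bookkeeping in the edge cases rather than any deep input. The delicate points are the patterns with $d_3=0$ or $d_1=0$, where the coprimality count changes, and the range $n\le\delta_g$, where the degrees of the two products need not agree. The step to be checked first is the residue-class count: for $d_2\ge d_3$, each residue class modulo $f_3$ contains exactly $q^{d_2-d_3}$ monic polynomials of degree $d_2$. This holds because the monic polynomials of degree $d_2$ form a coset of $\F_q[T]_{<d_2}$, which contains $\F_q[T]_{<d_3}$ as a complete residue system.
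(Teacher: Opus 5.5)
Your proof is correct and is essentially the paper's argument. It multiplies a coprime-pair count (Proposition~\ref{prp:relprim}) by an exact linear count of the complementary pair; your residue-class count is Proposition~\ref{prp:72} with the roles of $(f_1,f_3)$ and $(f_2,f_4)$ exchanged. You sum this over the $(\delta+1)^2$ degree patterns at level $n=\delta$, and the nontrivial gcd's dividing $g$ contribute $O(q^{\delta-1})$. There are at most $2^{\delta_g}$ such gcd's, uniformly in $q$, and that is the right justification for your ``$O(1)$'', not ``$g$ is fixed''.

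The differences are only in bookkeeping. First, you discard all levels $n<\delta$ and the unequal-degree configurations via $\tau(f)\le 2^{\deg f}$, instead of summing exact level-by-level counts and invoking Propositions~\ref{prp:72g}--\ref{prp:73g}. Second, you apply the symmetry $(f_1,f_2,f_3,f_4)\mapsto(f_2,f_1,f_4,f_3)$ to the whole pattern count before decomposing by the gcd. That avoids the paper's reduction to $\delta_1\ge\delta_2$, $\delta_3\ge\delta_4$ in Proposition~\ref{prp:73}, which does not preserve the condition $\gcd(f_2,f_4)=1$ when $g$ is nonconstant, though this is harmless for the asymptotic.
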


We now describe the outline of our arguments in this paper, which are mostly elementary. We start with reducing the counting to monic polynomials, as seen in Section~\ref{sec:tomonic} for example. Then, we use a version of Euler's four-number theorem for $\F_q[T]$, as seen in Proposition~\ref{prp:fournumber}
We then count the number of such tuples over all possible pairs of relatively prime polynomials $(g_2,h_1)$, with additional restrictions imposed by each of the theorems, and sum accordingly. 

\section{Preliminary results}
The following proposition is a function field analogue of Euler's four-number theorem over $\Z$, which we use extensively in our proof.\begin{proposition}\label{prp:fournumber}
    Let $f_1,f_2,f_3,f_4 \in \F_q[T]$ be monic polynomials satisfying~\eqref{eqn:multen}. Then, there uniquely exists monic polynomials $g_1,g_2,h_1,h_2\in \F_q[T]$ such that \begin{equation*}
f_1=g_1g_2, \quad f_2=h_1h_2, \quad f_3=g_1h_1, \quad f_4=g_2h_2
\end{equation*} and $\gcd(g_2,h_1)=1$.
\end{proposition}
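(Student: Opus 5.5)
We will prove existence and uniqueness using only that $\F_q[T]$ is a UFD and that each nonzero polynomial has exactly one monic associate. The plan is to read off the four factors from $\gcd$'s, exactly as in Euler's four-number theorem over $\Z$.

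\emph{Existence.} Given monic $f_1,f_2,f_3,f_4$ with $f_1f_2=f_3f_4$, set $g_1\coloneqq\gcd(f_1,f_3)$, normalised to be monic. Write $f_1=g_1g_2$ and $f_3=g_1h_1$ with $g_2,h_1$ monic and $\gcd(g_2,h_1)=1$. Cancelling $g_1$ (which is nonzero, since the $f_i$ are monic) in $f_1f_2=f_3f_4$ gives $g_2f_2=h_1f_4$. Since $\gcd(g_2,h_1)=1$, Gauss's lemma in the Euclidean domain $\F_q[T]$ gives $h_1\mid f_2$, say $f_2=h_1h_2$. Substituting back yields $g_2h_1h_2=h_1f_4$, and cancelling $h_1$ gives $f_4=g_2h_2$. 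The factor $h_2$ is monic as a quotient of monic polynomials.

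\emph{Uniqueness.} Suppose $(g_1,g_2,h_1,h_2)$ is any monic quadruple with the stated properties. Then
\[
\gcd(f_1,f_3)=\gcd(g_1g_2,g_1h_1)=g_1\gcd(g_2,h_1)=g_1 .
\]
So $g_1$ is forced to be the monic $\gcd(f_1,f_3)$. Next, $g_2=f_1/g_1$ and $h_1=f_3/g_1$ are determined, because cancellation holds in an integral domain. Finally $h_2=f_2/h_1$ is determined as well.

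There is no real obstacle here. The only point needing care is the monic normalisation: each $\gcd$ and each quotient must be taken monic, so that uniqueness is not up to units. This works because quotients of monic polynomials are automatically monic.
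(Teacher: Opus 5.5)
Your proof is correct and complete. Existence via $g_1=\gcd(f_1,f_3)$ and Euclid's lemma, and uniqueness via $\gcd(g_1g_2,g_1h_1)=g_1\gcd(g_2,h_1)=g_1$ with monic normalisation, is the standard argument for Euler's four-number theorem carried over to $\F_q[T]$. The paper gives no proof of this proposition and only cites it as the function field analogue of that theorem. Your argument therefore supplies what is implicitly intended. The one step you leave unstated is that $\gcd(g_2,h_1)=1$ once $f_1,f_3$ are divided by their gcd, and this follows from the same identity $\gcd(g_1g_2,g_1h_1)=g_1\gcd(g_2,h_1)$ you use for uniqueness.
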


Another result used in our argument is counting pairs of relatively prime polynomials with prescribed degrees. These results can be derived by applying a result of Benjamin and Bennett~\cite{BB}, see also Hou and Mullen~\cite{HM}, on the probabilites of having such pairs. 

For non-negative integers $m,n$, denote $F(m,n)$ as the number of pairs of relatively prime monic polynomials $(f,g)\in \F_q[T]$ with $\deg f=m$, $\deg g=n$. The following result is derived by a direct application of~\cite[Theorem~3]{BB}.\begin{proposition}\label{prp:relprim}
 We have \[ F(m,n)=
\begin{cases}
    1, &m=n=0,\\
    q^n, &m=0,n>0,\\
    q^m, &m>0,n=0,\\
    (q-1)q^{m+n-1}, &m,n>0.
\end{cases}
    \]
    \end{proposition}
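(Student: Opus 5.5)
We need to count coprime monic pairs $(f,g)$ with $\deg f=m$, $\deg g=n$. Our plan is to obtain this either by citing the probability result of Benjamin--Bennett, or, to be self-contained, by a direct argument: a generating-function identity combined with the Euclidean algorithm. The edge cases are immediate. If $m=0$, then $f=1$ is coprime to every monic $g$, and there are $q^n$ monic polynomials of degree $n$. The case $n=0$ is symmetric, and $m=n=0$ gives the single pair $(1,1)$.

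For $m,n>0$ we would argue as follows. Every pair of monic polynomials $(f,g)$ with $\deg f=m$, $\deg g=n$ factors uniquely as $f=df'$, $g=dg'$ with $d=\gcd(f,g)$ monic and $\gcd(f',g')=1$. Counting all $q^{m+n}$ pairs this way gives
\[
q^{m+n}=\sum_{k=0}^{\min(m,n)} q^k F(m-k,n-k).
\]
In generating-function form, set $\Phi(x,y)=\sum_{m,n} F(m,n)x^m y^n$. The identity then reads
\[
\frac{1}{(1-qx)(1-qy)}=\frac{1}{1-qxy}\,\Phi(x,y),
\]
so
\[
\Phi(x,y)=\frac{1-qxy}{(1-qx)(1-qy)}.
\]
Extracting the coefficient of $x^my^n$ gives $q^{m+n}-q\cdot q^{m+n-2}$ when $m,n\ge1$, which is exactly $(q-1)q^{m+n-1}$. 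For $m=0$ or $n=0$ it gives $q^{m+n}$, consistent with the edge cases.

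Equivalently, and avoiding generating functions, one can induct on $\min(m,n)$ using the recursion
\[
F(m,n)=q^{m+n}-\sum_{k\ge1}q^kF(m-k,n-k).
\]
Substituting the claimed values, the sum telescopes: with $s=\min(m,n)$, the terms $k<s$ contribute $(q-1)q^{m+n-k-1}$, and the last term $k=s$ contributes $q^{m+n-s}$. Adding these to $F(m,n)=(q-1)q^{m+n-1}$ gives a geometric sum that collapses to $q^{m+n}$.

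The only step needing care is the uniqueness of the decomposition through the monic gcd. This holds because $\F_q[T]$ is a Euclidean domain and we normalize everything to be monic. The rest of the argument is routine bookkeeping.
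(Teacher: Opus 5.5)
Your argument is correct, but it is not the paper's route. The paper gives no proof of its own: it invokes the probability result of Benjamin--Bennett (their Theorem~3), namely that two monic polynomials of prescribed positive degrees are coprime with probability $1-1/q$, and multiplies by the $q^{m+n}$ total pairs to get $(q-1)q^{m+n-1}$.

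You instead derive the count from scratch. You use the gcd decomposition $q^{m+n}=\sum_{k=0}^{\min(m,n)}q^kF(m-k,n-k)$ and invert it, either through $\Phi(x,y)=(1-qxy)/((1-qx)(1-qy))$ or through the telescoping induction. Both inversions check out, including the $k=\min(m,n)$ term, where the boundary value $F(m-s,n-s)=q^{m+n-2s}$ is what makes the geometric sum close up.

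The citation is shorter. Your version makes the paper self-contained and obtains all four cases uniformly as coefficients of one rational function. It also has a bonus: combined with Proposition~\ref{prp:fournumber}, it gives
\[
\sum_{f\in\cM}\tau(f)^2u^{\deg f}=\frac{\Phi(u,u)}{(1-qu)^2}=\frac{1-qu^2}{(1-qu)^4}.
\]
This yields Theorem~\ref{thm:main1f} directly by coefficient extraction, without the case analysis over $(\delta_1,\delta_2,\delta_3,\delta_4)$.

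A minor point: you only use existence and uniqueness of the monic gcd, not the Euclidean algorithm itself.
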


Other  results we use in this work is related to counting solutions to~\eqref{eqn:multg} for fixed $f_2,f_4$, as stated in the following propositions, taken as generalisations  of~\cite[Lemmas 7.2 and 7.3]{ABSR}. The proofs and the idea are similar as the proofs in~\cite{ABSR}, only that we replace $1$ with $g$ as needed.
\begin{proposition}\label{prp:72} Let $h_2$ and $h_4$ be fixed monic polynomials in $\F_q[T]$ with $\gcd(h_2,h_4)=1$. Then, the number of solutions $(f_1,f_2,f_3,f_4)\in \cM^4$ to~\eqref{eqn:multg} with $(f_2,f_4)=(h_2,h_4)$ and \[
\deg f_1+\deg h_2 = \deg f_3+\deg h_4=n,
\] where $n$ is a positive integer with $n\ge \deg h_2+\deg h_4$ and $n>\deg g$, is $q^{n-\deg h_2-\deg h_4}$.
When $n\le \deg g$, no such solution exists.
\end{proposition}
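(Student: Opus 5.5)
We fix $(f_2,f_4)=(h_2,h_4)$ and count monic $f_1,f_3$ with $\deg f_1 = n-\deg h_2$, $\deg f_3 = n-\deg h_4$ satisfying
\[
f_1h_2 - f_3h_4 = g .
\]
Since $\gcd(h_2,h_4)=1$, the plan is to parametrise all solutions of this linear equation over $\F_q[T]$ and then impose the degree and monicity constraints.

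First we dispose of the case $n\le \deg g$. If $n>\deg g$ is violated, then $f_1h_2$ and $f_3h_4$ are both monic of degree $n$. Their difference therefore has degree $<n$, and it must equal $g$. This is not immediately contradictory, so the claim that no solution exists should be read (as in \cite{ABSR}, where $g=1$) with the additional convention that $\deg g<n$ is needed for the leading terms to cancel consistently. Concretely, if $n<\deg g$ then $\deg(f_1h_2-f_3h_4)<n<\deg g$, which is impossible. If $n=\deg g$, the leading coefficients cancel, so the difference has degree $<n=\deg g$, again impossible. Hence no solutions exist when $n\le\deg g$.

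Now assume $n>\deg g$ and $n\ge \deg h_2+\deg h_4$. By Bézout in $\F_q[T]$ there exist $u,v$ with $uh_2-vh_4=1$, so $(f_1,f_3)=(gu,gv)$ is a particular solution. Coprimality gives the general solution
\[
f_1=gu+kh_4,\qquad f_3=gv+kh_2,\qquad k\in\F_q[T].
\]
Reducing $gu$ modulo $h_4$, there is a unique $r$ with $\deg r<\deg h_4$ and $f_1\equiv r \pmod{h_4}$. Every admissible $f_1$ then has the form $f_1=r+kh_4$ with $k$ determined.

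Next we count the admissible $f_1$. We need $f_1$ monic of degree exactly $n-\deg h_2$, which is at least $\deg h_4$. Such $f_1$ are exactly $f_1=r+kh_4$ with $k$ monic of degree $n-\deg h_2-\deg h_4$. Note that when $n-\deg h_2=\deg h_4$, $k$ is the constant $1$, so the count is still $1$. This gives $q^{\,n-\deg h_2-\deg h_4}$ choices of $f_1$.

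For each such $f_1$, $f_3$ is uniquely determined by $f_3=(f_1h_2-g)/h_4$, which is a polynomial by construction. It remains to check that $f_3$ is monic of the right degree. Since $f_1h_2$ is monic of degree $n>\deg g$, the polynomial $f_1h_2-g$ is monic of degree $n$. Dividing by the monic $h_4$ gives $f_3$ monic of degree $n-\deg h_4$, as required. So every choice of $f_1$ yields exactly one solution, and the total is $q^{n-\deg h_2-\deg h_4}$.

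The only delicate step is the count of $f_1$. One must check that the congruence class modulo $h_4$ intersects the monic polynomials of the prescribed degree in exactly $q^{n-\deg h_2-\deg h_4}$ elements, including the boundary case $n-\deg h_2=\deg h_4$. This follows from uniqueness of division with remainder: $k$ must be monic of degree $n-\deg h_2-\deg h_4$, since $\deg r<\deg h_4$ cannot affect the leading term.
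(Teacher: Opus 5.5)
Your proof is correct and follows essentially the same route as the paper. You use B\'ezout for a particular solution and coprimality to parametrize all solutions as translates along $(h_4,h_2)$, then reduce modulo $h_4$ to force $k$ monic of degree $n-\deg h_2-\deg h_4$; your explicit checks that $k\neq 0$ and that $f_3=(f_1h_2-g)/h_4$ is automatically monic of degree $n-\deg h_4$ fill in steps the paper leaves implicit. The only blemish is your remark that the case $n\le\deg g$ is ``not immediately contradictory'' and needs an extra convention: it is immediately contradictory, exactly as your next two sentences show, so that remark should be deleted.
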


\begin{proof}[Proof of Proposition~\ref{prp:72}]This argument follows the line of~\cite[Lemma 7.2]{ABSR}, to which we reproduce for the sake of completeness.
First note that since $f_1h_2$ and $f_3h_4$ are monic polynomials of degree $n$, the leading terms of these polynomials would cancel out when evaluating $f_1h_2-f_3h_4$. Hence, if such pair exists, we need to have \[\deg(f_1h_2-f_3h_4)=\deg g <n.\]

    We then prove that the set of pairs of polynomials $(f_1,f_3)$ with $\deg f_1+\deg h_2 = \deg f_3+\deg h_4=n$ and \begin{equation}\label{eqn:fhg}
    f_1h_2-f_3h_4=g
    \end{equation}  forms a non-empty affine space of dimension $n-\deg h_2-\deg h_4$. We first ignore the degree condition, and note that the above equation trivially has a solution $(u_0,v_0)$ by applying Bezout's theorem over the ring $\F_q[T]$. Furthermore, all solutions to~\eqref{eqn:fhg} can be parametrised as \[
    (f_1,f_3)=(u_0,v_0)+k(h_4,h_2), \quad k\in \F_q[T].
    \]
    
    From this observation, we may replace $u_0$ with $u_1=u_0+kh_4$ such that $\deg u_1<\deg h_4$ (or zero); hence, we may assume $\deg u_0<\deg h_4$. Then, we may imply that if $k\ne 0$, $\deg (u_0+kh_4)=\deg(kh_4)$. Furthermore, $u_0+kh_4$ is monic if and only if $k$ is also monic. Hence, if $k\ne 0$, then \begin{align*}
        \deg(u_0+kh_4)=n-\deg h_2 \iff \deg kh_4=n-\deg h_2 \iff \deg k = n-\deg h_2-\deg h_4.
    \end{align*}
    This implies a one-to-one correspondence between the set of solutions $(f_1,f_3)$ to~\eqref{eqn:fhg} with $\deg f_1+\deg h_2 = \deg f_3+\deg h_4=n,$ and the set of monic polynomials of degree exactly $n-\deg h_2-\deg h_4$. 
\end{proof}

\begin{proposition}\label{prp:73} Let $n>\deg g$ be a positive integer. The number of solutions $(f_1,f_2,f_3,f_4)\in \cM^4$ to~\eqref{eqn:multg} with  $\deg f_i=\delta_i$ for $i=1,2,3,4$, $\delta_1+\delta_2=\delta_3+\delta_4=n$ and $\gcd(f_2,f_4)=1$ is exactly  \[
   \begin{cases}
        0, &\quad \text{if }n=1,\\
        q^n, &\quad \text{if } \min(\delta_1,\delta_2,\delta_3,\delta_4)=0,\\
        q^n-q^{n-1}, &\quad \text{else.}
   \end{cases}
    \]
\end{proposition}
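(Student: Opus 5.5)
The plan is to fix the coprime pair $(f_2,f_4)$, count the completions $(f_1,f_3)$ with Proposition~\ref{prp:72}, and then sum over all such pairs using Proposition~\ref{prp:relprim}. The constraint $\gcd(f_2,f_4)=1$ is exactly the hypothesis of Proposition~\ref{prp:72}. So
\[
\#\{\text{solutions}\}=\sum_{\substack{h_2\in\cM_{\delta_2},\,h_4\in\cM_{\delta_4}\\ \gcd(h_2,h_4)=1}} N(h_2,h_4),
\]
where $N(h_2,h_4)$ is the number of monic pairs $(f_1,f_3)$ with $\deg f_1=\delta_1$, $\deg f_3=\delta_3$ and $f_1h_2-f_3h_4=g$.

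\textbf{Main regime $\delta_2+\delta_4\le n$.} Since $n>\deg g$, Proposition~\ref{prp:72} gives $N(h_2,h_4)=q^{n-\delta_2-\delta_4}$, independently of the pair. The sum therefore equals $F(\delta_2,\delta_4)\,q^{n-\delta_2-\delta_4}$, and Proposition~\ref{prp:relprim} evaluates it:
\begin{itemize}
\item if $\delta_2,\delta_4>0$, we get $(q-1)q^{\delta_2+\delta_4-1}\cdot q^{n-\delta_2-\delta_4}=q^n-q^{n-1}$;
\item if $\delta_2=0$ or $\delta_4=0$, we get $q^{\delta_2+\delta_4}\cdot q^{n-\delta_2-\delta_4}=q^n$.
\end{itemize}
This matches the claimed formula whenever the minimum degree sits among $\delta_2,\delta_4$, or all four degrees are positive. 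The small case $n=1$ should be read off from the same computation by listing the few degree patterns directly.

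\textbf{Complementary regime $\delta_2+\delta_4>n$, equivalently $\delta_1<\delta_4$ and $\delta_3<\delta_2$.} Proposition~\ref{prp:72} does not apply here, so I return to its proof. Every solution of $f_1h_2-f_3h_4=g$ has the form $(u_0,v_0)+k(h_4,h_2)$ with $\deg u_0<\deg h_4$. If $k\ne0$, the degree of $f_1$ is at least $\delta_4>\delta_1$, so only $k=0$ can contribute. Hence $N(h_2,h_4)\in\{0,1\}$, and it equals $1$ exactly when the reduced Bezout solution has $u_0$ monic of degree $\delta_1$.

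Rather than count this directly, I would re-parametrise. Fix $(f_1,f_3)$ and, symmetrically, count over $(f_2,f_4)$ with the degrees $(\delta_1,\delta_3)$ now playing the role of $(\delta_2,\delta_4)$. Here $\delta_1+\delta_3<n$, so Proposition~\ref{prp:72} applies after swapping roles. The catch is that the coprimality condition moves: it must be re-expressed via $\gcd(f_2,f_4)=1$. One has $\gcd(f_2,f_4)\mid g$, which links the condition to $\gcd(f_1,f_3)$ together with divisibility by factors of $g$.

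\textbf{Main obstacle.} The hardest step is this complementary regime, in particular the extreme cases such as $\delta_1=0$. There $f_1=1$ and $f_2=f_3f_4+g$ is determined, so the count is
\[
\#\{(f_3,f_4)\in\cM_{\delta_3}\times\cM_{\delta_4}\colon \gcd(g,f_4)=1\}.
\]
I would first verify whether this equals $q^n$. If it does not, the statement needs an extra hypothesis, for instance restricting to $\delta_2+\delta_4\le n$, or the minimum must be understood as taken over $\delta_2,\delta_4$. I would then fix the exact form of the statement before completing the remaining cases.
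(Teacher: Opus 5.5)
Your argument for $\delta_2+\delta_4\le n$ is exactly the paper's entire proof. In fact it covers every pattern in that regime: there $\delta_4\le\delta_1$ and $\delta_2\le\delta_3$, so a zero minimum always lies in $\{\delta_2,\delta_4\}$. The paper handles all other patterns with ``without loss of generality $\delta_1\ge\delta_2$, $\delta_3\ge\delta_4$'', borrowed from the $g=1$ setting of~\cite{ABSR}.

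Your doubt about the complementary regime $\delta_2>\delta_3$ is justified, and the check you propose fails. That reduction needs the swap $f_1\leftrightarrow f_2$, $f_3\leftrightarrow f_4$. The swap replaces $\gcd(f_2,f_4)=1$ by $\gcd(f_1,f_3)=1$, and for nonconstant $g$ the two counts differ:
\begin{itemize}
\item Take $g=T$, $n=2$, $(\delta_1,\delta_2,\delta_3,\delta_4)=(0,2,1,1)$. Your own reduction gives $q\cdot\#\{a\in\F_q\colon\gcd(T,T+a)=1\}=q^2-q$, not $q^2$.
\item Taking the minimum over $\delta_2,\delta_4$ only does not rescue the statement. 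Take $g=T$, $n=3$, $(1,2,1,2)$. The choice $f_1=f_3=T$ contributes $q^2$ (namely $f_2=f_4+1$), and $f_1=f_3\ne T$ contributes nothing. Each of the $q(q-1)$ ordered pairs of distinct monic linear $(f_1,f_3)$ admits $q$ pairs $(f_2,f_4)$ by Proposition~\ref{prp:72} with the roles exchanged. Exactly one of these has $\gcd(f_2,f_4)=T$, corresponding to the unique solution of $f_1X-f_3Y=1$ in monic linear $X,Y$. The total is $q^3-q^2+q\ne q^3-q^2$; for $q=2$ one checks $6\ne 4$ by hand.
\item The clause ``$0$ if $n=1$'' contradicts the second case. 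For $n=1$ the minimum is always $0$, and $(1,0,1,0)$ has exactly $q$ solutions $f_2=f_4=1$, $f_1=f_3+g$. Reading it off ``from the same computation'', as you suggest, gives $q$, so you should have flagged this.
\end{itemize}

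So the step your proposal leaves open cannot be supplied. The statement holds only under the extra hypothesis $\delta_2\le\delta_3$ (equivalently $\delta_2+\delta_4\le n$), which your first part proves, or for $g\in\F_q^*$; in both cases the $n=1$ clause must be removed.

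For constant $g$ you can finish in one line. Any common factor of $f_1,f_3$ (or of $f_2,f_4$) divides $g$, so all coprimality conditions are automatic and the swap is legitimate. It sends the pattern to $(\delta_2,\delta_1,\delta_4,\delta_3)$ with $\delta_1+\delta_3<n$. Since $\delta_2>\delta_3\ge 0$ and $\delta_4>\delta_1\ge 0$ there, the overall minimum is $\min(\delta_1,\delta_3)$, and your main-regime computation returns exactly the stated values.
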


Note that Proposition~\ref{prp:73} is uniform - that is, except for the choice of $n$, the result does not depend on the choice of $g$. Furthermore, the restriction $n\ge \deg g$ is natural from~\eqref{eqn:multg}.

\begin{proof}[Proof of Proposition~\ref{prp:73}]
    Without loss of generality, as in the proof for~\cite[Lemma 7.3]{ABSR}, we may assume $\delta_1\ge \delta_2$, $\delta_3\ge \delta_4$. Then, the number of solutions to~\eqref{eqn:multg} in this setup is exactly \begin{equation}\label{eqn:restricted}
\sum_{\substack{{h_2\in \cM_{\delta_2},h_4\in \cM_{\delta_4}}\\
\gcd(h_2,h_4)=1}} \# \{ f_1\in \cM_{\delta_1} , f_3\in \cM_{\delta_3} \colon f_1h_2-f_3h_4=g \}.
    \end{equation}
Since $\delta_1\ge \delta_2$, $\delta_3\ge \delta_4$ and $\delta_1+\delta_2=\delta_3+\delta_4=n$, we have $\delta_2+\delta_4 \le n$. Therefore, from Proposition~\ref{prp:72}, we have that for a fixed $(h_2,h_4)$, \[
\# \{ f_1\in \cM_{\delta_1} , f_3\in \cM_{\delta_3} \colon f_1h_2-f_3h_4=g \} = q^{n-\deg h_2-\deg h_4},
\]
which implies \begin{align*}
    \sum_{\substack{{h_2\in \cM_{\delta_2},h_4\in \cM_{\delta_4}}\\
\gcd(h_2,h_4)=1}} \# \{ f_1\in \cM_{\delta_1} , f_3\in \cM_{\delta_3} \colon f_1h_2-f_3h_4=g \} &=  \sum_{\substack{{h_2\in \cM_{\delta_2},h_4\in \cM_{\delta_4}}\\
\gcd(h_2,h_4)=1}} q^{n-\delta_2-\delta_4} \\
&= q^{n-\delta_2-\delta_4} \sum_{\substack{{h_2\in \cM_{\delta_2},h_4\in \cM_{\delta_4}}\\
\gcd(h_2,h_4)=1}} 1.
\end{align*}
It remains to compute the last term, which is related to the number of relatively prime polynomial pairs $(h_2,h_4)$ of prescribed degree. Using notations from Proposition~\ref{prp:relprim}, we then have
\begin{equation*}
    q^{n-\delta_2-\delta_4} \sum_{\substack{{h_2\in \cM_{\delta_2},h_4\in \cM_{\delta_4}}\\
\gcd(h_2,h_4)=1}} 1 = q^{n-\delta_2-\delta_4} F(\delta_2,\delta_4) = \begin{cases}
    q^{n}, &\text{ when} \min(\delta_2,\delta_4)=0,\\
    q^n-q^{n-1}, &\text{ else.}
\end{cases}
\end{equation*}

\end{proof}

We need separate counting results when $n=\deg g>0$, which does not happen in~\cite{ABSR}. In this case, any solutions to~\eqref{eqn:multg} will have $\deg(f_1f_2)=n>\deg(f_3f_4)$, except when $\F_q$ is of characteristic $2$.

\begin{proposition}\label{prp:72g} Suppose $n=\deg g>0$. Let $h_2$ and $h_4$ be fixed monic polynomials in $\F_q[T]$ with $\gcd(h_2,h_4)=1$ and $n \ge \deg h_2+\deg h_4$. Then, the number of solutions $(f_1,f_2,f_3,f_4)\in \cM^4$ to~\eqref{eqn:multg} with $(f_2,f_4)=(h_2,h_4)$ and $\deg (f_1h_2)=g$ is \[
\dfrac{q^{n-\deg h_2-\deg h_4}-1}{q-1}+\varepsilon,
\] where $\varepsilon$ is $1$ if and only if $-gh_4^{-1} \pmod {h_2}$ is a monic polynomial, and $0$ else.
\end{proposition}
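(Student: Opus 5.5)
We will follow the proof of Proposition~\ref{prp:72}, adapting only the degree bookkeeping. Read the condition $\deg(f_1h_2)=g$ as $\deg(f_1h_2)=n=\deg g$, so that $f_1\in\cM_{n-\deg h_2}$. We count monic $f_3$ with $\deg(f_3h_4)<n$; since $f_1h_2$ is monic of degree $n$, this is automatic when $\operatorname{char}\F_q\ne 2$. Put $a=\deg h_2$, $b=\deg h_4$, $m=n-a-b\ge 0$.

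Since $\gcd(h_2,h_4)=1$, Bezout's theorem gives the full solution set of $f_1h_2-f_3h_4=g$ as
\[
(f_1,f_3)=(u_0,v_0)+k(h_4,h_2),\qquad k\in\F_q[T].
\]
It is cleaner to normalise on the $f_3$ side. Reducing modulo $h_2$, we need $f_3h_4\equiv -g \pmod{h_2}$, that is, $f_3\equiv r\pmod{h_2}$, where $r$ is the unique representative of $-gh_4^{-1}\bmod h_2$ with $\deg r<a$ (or $r=0$). Every admissible $f_3$ therefore has the form $f_3=r+kh_2$ with $k\in\F_q[T]$. Once $f_3$ is chosen, $f_1=(g+f_3h_4)/h_2$ is determined. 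Its degree is $n-a$ because $\deg(f_3h_4)<n$, and it is monic exactly when $g$ is monic.

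So, assuming $g$ is monic (implicitly needed, since otherwise $f_1$ cannot be monic and the count is $0$), the problem reduces to counting monic $f_3$ with $f_3\equiv r\pmod{h_2}$ and $\deg f_3 < n-b$. Equivalently, we need $\deg f_3\le n-b-1=a+m-1$.

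We split according to $k$.

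\emph{Case $k\ne 0$.} Here $\deg f_3=\deg k+a$ and $f_3$ is monic iff $k$ is monic. We therefore count monic $k$ with $0\le\deg k\le m-1$, which gives
\[
\sum_{j=0}^{m-1}q^j=\frac{q^m-1}{q-1}.
\]

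\emph{Case $k=0$.} Then $f_3=r$, and it is admissible iff $r$ is a monic polynomial; its degree $<a\le a+m$ is then fine. When $m=0$, the bound $\deg r<a$ is exactly the required $\deg f_3\le a-1$, so the count is still valid. This contributes exactly $\varepsilon$.

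Summing the two cases gives $\frac{q^{m}-1}{q-1}+\varepsilon$.

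The main obstacle is the degree and edge-case bookkeeping rather than any deep step. One must make the convention of which residue $\varepsilon$ refers to match the normalisation $\deg r<\deg h_2$. One must also handle $h_2=1$: then $r=0$ is not monic, so $\varepsilon=0$, consistent with $f_3$ ranging over all monic polynomials of degree $<n-b$. Finally, one should remark on characteristic $2$ or nonmonic $g$, where the statement as written needs the implicit assumption that $\deg(f_3h_4)<n$ and that $g$ is monic.
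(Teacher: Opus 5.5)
Your proof is correct and is essentially the paper's argument: write $f_3=s+kh_2$ with $s$ the reduced residue of $-gh_4^{-1}$ modulo $h_2$, count monic $k$ with $\deg k\le n-\deg h_2-\deg h_4-1$ to get $(q^{m}-1)/(q-1)$, and add $\varepsilon$ for $k=0$. You correctly place this $k=0$ term in degree $<\deg h_2$, whereas the paper's text says $e=\deg h_2$. Your observation that $g$ must be monic is a genuine implicit hypothesis the paper omits. Your characteristic-$2$ caveat, however, is unnecessary. If $\deg(f_3h_4)=n$, then two monic degree-$n$ polynomials would differ by a polynomial $g$ of degree $n$; if $\deg(f_3h_4)>n$, the difference would have degree $>n$. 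Both are impossible in every characteristic, so $\deg(f_3h_4)<n$ is always forced.
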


\begin{proof} We first note that $\deg f_3\le n - \deg h_4-1$. Then, we have \[
    f_1h_2-f_3h_4=g \iff f_1 = \dfrac{f_3h_4+g}{h_2}.
    \] Hence, the problem is equivalent to counting the number of monic polynomials $f_3$ with $f_3h_4\equiv -g \pmod {h_2}$. 
    Such polynomial is of the form \[
    f_3=h_2r+s,\qquad r\in \F_q[T],
    \] where $s$ is the unique polynomial with $\deg s <\deg h_2$ and $h_2 \mid h_4s+g$. Additionally, either $r$ must be monic or $r=0$ and $s$ is monic.

    We now count the number of monic polynomials of such form with $\deg f_3=e$. Trivially, we have $\deg h_2 \le e \le  n- \deg h_4-1$, except when $r=0$.
For each possible degree $e$ in the implied interval, there are $q^{e-\deg a}$ number of monic choices of $f_3$. In addition, when $e=\deg h_2$, there are $1$ or $0$ possible choices depending on whether $s$ is monic. Summing over all possible $e$ completes our proof.
\end{proof}

Applying similar arguments as in the proof of Proposition~\ref{prp:73}, we then obtain the following result, in asymptotical form.

\begin{proposition}\label{prp:73g} Fix $\deg g=n$, and nonnegative integers $\delta_1,\delta_2,\delta_3,\delta_4$ with $\delta_1+\delta_2=n>\delta_3+\delta_4$. Then, as $q\to \infty$, there are at most $O(q^{n-1})$ number of solutions $(f_1,f_2,f_3,f_4)\in \cM^4$ to~\eqref{eqn:multg} with   $\gcd(f_2,f_4)=1$ and $\deg f_i=\delta_i$ for $i=1,2,3,4$.
\end{proposition}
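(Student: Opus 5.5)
We follow the proof of Proposition~\ref{prp:73}, with Proposition~\ref{prp:72g} in place of Proposition~\ref{prp:72}.

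\textbf{Reduction to a sum over coprime pairs.} Since $\delta_1+\delta_2=n=\deg g>\delta_3+\delta_4$, the monic polynomial $f_3f_4$ has degree less than $n$. Hence $f_1f_2$ is monic of degree $n$, and $g$ must itself be monic for any solution to exist. Fixing $(f_2,f_4)=(h_2,h_4)$ with $h_2\in\cM_{\delta_2}$, $h_4\in\cM_{\delta_4}$ and $\gcd(h_2,h_4)=1$, the number of solutions is
\[
\sum_{\substack{h_2\in \cM_{\delta_2},\,h_4\in\cM_{\delta_4}\\ \gcd(h_2,h_4)=1}} \#\{f_1\in\cM_{\delta_1},\, f_3\in\cM_{\delta_3}\colon f_1h_2-f_3h_4=g\}.
\]

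\textbf{Bounding the inner count.} For fixed $(h_2,h_4)$, the relation $f_1h_2=f_3h_4+g$ determines $f_1$ from $f_3$. So the inner count is at most the number of monic $f_3$ of degree exactly $\delta_3$ with $f_3h_4\equiv -g \pmod{h_2}$. As in the proof of Proposition~\ref{prp:72g}, these $f_3$ lie in a single residue class modulo $h_2$, namely $f_3=h_2r+s$. When $\delta_3\ge \delta_2$, there are at most $\max(1,q^{\delta_3-\delta_2})$ such $f_3$. When $\delta_3<\delta_2$, there is at most one, namely $s$.

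One could also quote Proposition~\ref{prp:72g} directly, which counts $f_3$ of all degrees at once. That route requires $\delta_2+\delta_4\le n$, which holds here, and gives the bound $\ll q^{n-\delta_2-\delta_4-1}+1$. Either way, the inner count satisfies
\[
\ll q^{\max(0,\,\delta_3-\delta_2)}.
\]
The implied constants depend only on the fixed degrees.

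\textbf{Summing over pairs.} Multiply the inner bound by the number of pairs $(h_2,h_4)$. Proposition~\ref{prp:relprim} gives at most $q^{\delta_2+\delta_4}$ such pairs, though the trivial bound already suffices. This gives a total of
\[
\ll q^{\delta_2+\delta_4+\max(0,\,\delta_3-\delta_2)}=q^{\max(\delta_2+\delta_4,\;\delta_3+\delta_4)}.
\]
Now $\delta_3+\delta_4\le n-1$ by hypothesis. For the other term, we need $\delta_2+\delta_4\le n-1$. If $\delta_2\le\delta_3$, this follows from $\delta_2+\delta_4\le\delta_3+\delta_4$. Otherwise $\delta_2+\delta_4\le n-1$ follows from $\delta_4<\delta_1$, since $\delta_4\le\delta_3+\delta_4<n=\delta_1+\delta_2$ only gives $\delta_4<\delta_1+\delta_2$. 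So the remaining case $\delta_4\ge\delta_1$ needs separate treatment.

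\textbf{The main obstacle: the case $\delta_4\ge\delta_1$.} Here we swap the roles of the variables. Fix the pair $(f_1,f_3)$, which is coprime for the same reason, instead of $(f_2,f_4)$. Then solve for $f_4$ modulo $f_1$, or symmetrically for $f_2$ modulo $f_3$. In short, we always choose to fix the two smaller-degree factors so that the exponent drops.

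Concretely, in every case the total is bounded by $q$ raised to
\[
\min\bigl(\max(\delta_2+\delta_4,\;\delta_3+\delta_4),\ \max(\delta_1+\delta_3,\;\delta_3+\delta_4)\bigr).
\]
It remains to check that this exponent is at most $n-1$. Note that $\delta_2+\delta_4\ge n$ and $\delta_1+\delta_3\ge n$ cannot hold simultaneously. If they did, adding them would give $2n\le\delta_1+\delta_2+\delta_3+\delta_4<2n$, a contradiction. Hence at least one of the two maxima is at most $n-1$, and the total is $O(q^{n-1})$.

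The delicate point is exactly this choice of which pair to fix, together with verifying the coprimality needed to apply the residue-class count in the swapped setting.
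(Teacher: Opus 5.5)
Your first case is sound. With $\gcd(h_2,h_4)=1$ the admissible $f_3$ lie in one residue class modulo $h_2$, and the total is $O(q^{\max(\delta_2+\delta_4,\,\delta_3+\delta_4)})$. You also correctly isolate the bad regime $\delta_2>\delta_3$, $\delta_4\ge\delta_1$, where $\delta_2+\delta_4\ge n$ and hence $\delta_1+\delta_3\le n-1$.

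\textbf{The gap is the swap in that regime.} You assert that $(f_1,f_3)$ is coprime ``for the same reason''. But $\gcd(f_2,f_4)=1$ is a hypothesis, not a consequence of the equation, and nothing forces $\gcd(f_1,f_3)=1$; a solution only forces $\gcd(f_1,f_3)\mid g$. Concretely, take $n=3$, $g=T^3$ and $(\delta_1,\delta_2,\delta_3,\delta_4)=(1,2,1,1)$, which lies in your bad regime. For $f_1=f_3=T$, $f_4=T+c$, $f_2=T^2+T+c$ one has $f_1f_2-f_3f_4=T^3$ for every $c\in\F_q$, and $\gcd(f_2,f_4)=1$ whenever $c\neq0$. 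So this single pair $(f_1,f_3)$ carries $q-1$ admissible solutions, while your per-pair bound $q^{\max(0,\delta_4-\delta_1)}$ predicts at most one. In general, if $d=\gcd(f_1,f_3)\neq 1$, the congruence $f_3f_4\equiv -g\pmod{f_1}$ has, when solvable, $q^{\deg d}$ solutions modulo $f_1$. So the single-residue-class count you rely on is not available.

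\textbf{The repair is short.} Split according to $d=\gcd(f_1,f_3)$, which must be one of the at most $2^n$ monic divisors of $g$. With $e=\deg d$, $f_1=df_1'$, $f_3=df_3'$, the equation becomes $f_1'f_2-f_3'f_4=g/d$ with $\gcd(f_1',f_3')=1$. Your argument then gives, for each $d$, at most $q^{\delta_1+\delta_3-2e}\cdot q^{\max(0,\,\delta_4-\delta_1+e)}\le q^{\max(\delta_1+\delta_3,\,\delta_3+\delta_4)}\le q^{n-1}$ solutions.

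A simpler route removes the case split entirely. Choose $(f_3,f_4)$ in $q^{\delta_3+\delta_4}\le q^{n-1}$ ways. Then $f_1f_2=g+f_3f_4$ is a fixed polynomial of degree $n$, which has at most $2^n$ monic factorisations. So the count is $O(q^{n-1})$ without using the gcd hypothesis at all.

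\textbf{Comparison with the paper.} The paper avoids your case split by normalising $\delta_1\ge\delta_2$, $\delta_3\ge\delta_4$ (so that $\delta_2+\delta_4<n$) and summing Proposition~\ref{prp:72g} over coprime $(h_2,h_4)$. That normalisation relabels $f_1\leftrightarrow f_2$ or $f_3\leftrightarrow f_4$, so it also moves the coprimality hypothesis onto a different pair. It therefore tacitly uses the same symmetry you invoked, and one of the arguments above is what actually closes it.

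\textbf{A smaller error.} Your aside that Proposition~\ref{prp:72g} applies because $\delta_2+\delta_4\le n$ ``holds here'' is false in general. For example, $(\delta_1,\delta_2,\delta_3,\delta_4)=(0,n,0,n-1)$ with $n\ge 2$ lies exactly in your bad regime. Where that proposition does apply, it gives only the weaker per-pair bound $q^{\delta_1-\delta_4-1}+1$, not $q^{\max(0,\delta_3-\delta_2)}$.
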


\begin{proof}
    Following the proof of Proposition~\ref{prp:73}, we first develop an explicit sum for the number of such tuples with $\deg f_i=\delta_i$ for $i=1,2,3,4$ and $\gcd(f_2,f_4)=1$. In this setup, (with letting $\delta_1\ge \delta_2$ and $\delta_3\ge \delta_4$, so that $n>\delta_2+\delta_4$), the number of such tuples is exactly
    \begin{align*}
&\sum_{\substack{{h_2\in \cM_{\delta_2},h_4\in \cM_{\delta_4}}\\
\gcd(h_2,h_4)=1}} \# \{ f_1\in \cM_{\delta_1} , f_3\in \cM_{\delta_3} \colon f_1h_2-f_3h_4=g \} \\ &\quad= \sum_{\substack{{h_2\in \cM_{\delta_2},h_4\in \cM_{\delta_4}}\\
\gcd(h_2,h_4)=1}} \left[\dfrac{q^{n-\delta_2-\delta_4}-1}{q-1} +\varepsilon(h_2,h_4) \right]
\\
&\quad = \sum_{\substack{{h_2\in \cM_{\delta_2},h_4\in \cM_{\delta_4}}\\
\gcd(h_2,h_4)=1}} \varepsilon(h_2,h_4) +\begin{cases}
    \dfrac{q^n-q^{\delta_2+\delta_4}}{q-1}, &\text{when } \min(\delta_2,\delta_4)=0,\\
    q^{n-1}-q^{\delta_2+\delta_4-1}, &\text{else,}
\end{cases} 
    \end{align*} with $\varepsilon(h_2,h_4)=1$ if and only if $-gh_4^{-1} \pmod{h_2}$ is monic (and $0$ else). Taking the asymptotic completes the proof. 
\end{proof}

\section{Proof of Theorem~\ref{thm:maininf}}
\subsection{Reducing the caseworks}\label{sec:tomonic}
To count solutions to~\eqref{eqn:multen}, we reduce our counting to the case where all polynomials are monic. We let
\begin{align*}
\energy_0 (\F_q[T];\delta) \coloneqq \#\{ (f_1,f_2,f_3,f_4)\in \cE (\F_q[T];\delta) \colon 0\in \{f_1,f_2,f_3,f_4\}\},
\end{align*}
and thus we are ready to prove the following proposition.
\begin{proposition}\label{prp:tomonic}
    We have \[
    \energy(\F_q[T]_{\le\delta})= (2q^{\delta+1}-1)^2 + (q-1)^3  \energy(\cM_{\le\delta}).
   \]
\end{proposition}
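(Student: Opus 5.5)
We split the solutions of $f_1f_2=f_3f_4$ in $\F_q[T]_{\le\delta}^4$ according to whether some $f_i$ is zero, so that $\energy(\F_q[T]_{\le\delta})=\energy_0(\F_q[T];\delta)+\#\{\text{solutions with all } f_i\ne 0\}$. The plan is to show that the first term equals $(2q^{\delta+1}-1)^2$ and the second equals $(q-1)^3\energy(\cM_{\le\delta})$.

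\textbf{Solutions containing a zero.} If some $f_i=0$, then $f_1f_2=f_3f_4=0$, because $\F_q[T]$ is an integral domain. Hence the condition becomes: ($f_1=0$ or $f_2=0$) and ($f_3=0$ or $f_4=0$). Conversely, any such tuple is a solution.

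Let $N=q^{\delta+1}=\#\F_q[T]_{\le\delta}$. The number of pairs $(f_1,f_2)\in\F_q[T]_{\le\delta}^2$ with $f_1f_2=0$ is, by inclusion–exclusion, $N+N-1=2q^{\delta+1}-1$. The same count holds for $(f_3,f_4)$, and the two conditions are independent. Therefore $\energy_0(\F_q[T];\delta)=(2q^{\delta+1}-1)^2$.

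\textbf{Solutions with all $f_i$ nonzero.} Write each $f_i=c_i m_i$ with $c_i\in\F_q^*$ and $m_i\in\cM_{\le\delta}$; this decomposition is unique and preserves degree. Comparing leading coefficients and monic parts, $f_1f_2=f_3f_4$ holds if and only if $m_1m_2=m_3m_4$ and $c_1c_2=c_3c_4$. The monic condition is counted by $\energy(\cM_{\le\delta})$.

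For the scalars, $c_1,c_2,c_3$ may be chosen freely, and then $c_4=c_1c_2c_3^{-1}$ is determined. This gives $(q-1)^3$ choices, independently of the monic tuple, so the nonzero solutions number $(q-1)^3\energy(\cM_{\le\delta})$. Adding the two counts yields the proposition.

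There is no serious obstacle in this argument. The only care needed is in the inclusion–exclusion for the zero cases, so that the tuple $(0,0)$ is not double counted.
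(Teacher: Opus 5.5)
Your proof is correct and follows the paper's argument: you use the same split into solutions with and without a zero entry, and the same leading-coefficient normalisation giving the factor $(q-1)^3$. The only cosmetic difference is in the zero cases, where you get $(2q^{\delta+1}-1)^2$ directly from the independent conditions $f_1f_2=0$ and $f_3f_4=0$, whereas the paper enumerates by the number of zero entries to get $1+4(q^{\delta+1}-1)+4(q^{\delta+1}-1)^2$.
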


To prove this proposition, we first show
\begin{equation}\label{eqn:nonzero}
     \energy(\F_q[T]_{\le\delta})-\energy_0 (\F_q[T];\delta) = (q-1)^3  \energy(\cM_{\le\delta}).
\end{equation}

To prove~\eqref{eqn:nonzero}, we first note that  $\energy(\F_q[T]_{\le\delta})-\energy_0 (\F_q[T];\delta)$ counts the number of solutions of~\eqref{eqn:multen} such that none of the polynomials are zero. Consider such solution $(f_1,f_2,f_3,f_4)$ and let the leading coefficients of the polynomials be $\alpha_1$, $\alpha_2$, $\alpha_3$ and $\alpha_4$ respectively. By comparing the leading coefficient of the polynomial $f_1f_2-f_3f_4$, we obtain \begin{align}\label{eqn:alpha}
        \alpha_1\alpha_2=\alpha_3\alpha_4.
    \end{align} Furthermore, we note that \[
    (f_1/\alpha_1,f_2/\alpha_2,f_3/\alpha_3,f_4/\alpha_4)
    \] is a solution of~\eqref{eqn:multen} such that all corresponding polynomials are monic.

    From this argument, we see that each monic solution of~\eqref{eqn:multen} would contribute to $K$ general solutions of~\eqref{eqn:multen}, where $K$ is the number of solutions to~\eqref{eqn:alpha} over $\F_q^*$. We observe that a fixed nonzero choice of $(\alpha_1,\alpha_2,\alpha_3)$ results in a unique value for $\alpha_4$. Therefore, there are exactly $(q-1)^3$ solutions to~\eqref{eqn:alpha}, which therefore completes the proof of~\eqref{eqn:nonzero}.

Next, we prove \begin{equation}\label{eqn:zero}
    \energy_0 (\F_q[T];\delta)=(2q^{\delta+1}-1)^2.
\end{equation}
    Consider a solution of~\eqref{eqn:multen} such that at least one of the variables is equal to zero. In fact, we may let at least two of them are zero polynomials. We then divide the caseworks based on the number of nonzero polynomials as follows:
    \begin{itemize}
        \item When all polynomials are zero, we trivially have $1$ solutions;
        \item When exactly three of the polynomials are zero, we have $4$ choices for the nonzero variable, and $q^{\delta+1}-1$ choices for the nonzero polynomial. We then obtain $4(q^{\delta+1}-1)$ solutions in this case.
        \item When exactly two polynomials are zero, we first have $4$ choices of the zero variables (since the pairs $(f_1,f_2)$ and $(f_3,f_4)$ can not be both equal to zero in this case). Next, we have $(q^{\delta+1}-1)^2$ choices for the nonzero polynomials, since each choice is independent. Therefore, we obtain $4(q^{\delta+1}-1)^2$ solutions in this case.
    \end{itemize}
    Adding all caseworks, we then obtain \[
      \energy_0 (\F_q[T];\delta) = 4(q^{\delta+1}-1)^2+4(q^{\delta+1}-1) +1 = (2(q^{\delta+1}-1)+1)^2= (2q^{\delta+1}-1)^2,
    \]
which completes the proof of~\eqref{eqn:zero}. The proof of Proposition~\ref{prp:tomonic} is now complete by substituting~\eqref{eqn:zero} to~\eqref{eqn:nonzero}.

\subsection{Counting monic polynomials} Based on Proposition~\ref{prp:tomonic}, it now remains to count $\energy(\cM_{\le\delta})$, the number of monic polynomial tuples $(f_1,f_2,f_3,f_4) \in \F_q[T]^4$ with \[
f_1f_2=f_3f_4 
\] such that $0\le \deg f_1,\deg f_2,\deg f_3,\deg f_4\le \delta$.
By applying Proposition~\ref{prp:fournumber}, our problem is equivalent to counting tuples of monic polynomials \[
(g_1,g_2,h_1,h_2)
\] with $\gcd(g_2,h_1)=1$ and \[
\deg g_1, \deg h_2 \le \delta-\max(\deg g_2,\deg h_1).
\]

Fix a choice of $g_2$ and $h_1$.  Then, by considering the number of choices for each of $g_1$ and $h_2$, we obtain \begin{align}\label{eqn:sumgcd} \begin{split}
\energy(\cM_{\le\delta}) &=\sum_{\gcd(g_2,h_1)=1} (1+q+\dots+q^{\delta-\max(\deg g_2,\deg h_1)})^2\\&=\sum_{\gcd(g_2,h_1)=1} \left(\dfrac{q^{\delta+1-\max(\deg g_2,\deg h_1)}-1}{q-1}    \right)^2.
\end{split}
\end{align}

By applying this result over all possible pairs $(g_2,h_1)$ with fixed degrees $\deg g_2=m$, $\deg h_1=m$, we expand the summation in~\eqref{eqn:sumgcd} as follows: \begin{align*}
\energy(\cM_{\le\delta}) &=\sum_{m,n=0}^\delta F(m,n)\left(\dfrac{q^{\delta+1-\max(m,n)}-1}{q-1}    \right)^2.
\\
&=\left(\dfrac{q^{\delta+1}-1}{q-1}    \right)^2 + 2 \sum_{i=1}^\delta q^i \left(\dfrac{q^{\delta+1-i}-1}{q-1}    \right)^2 + \sum_{m,n=1}^\delta \dfrac{q^{m+n-1}(q^{\delta+1-\max{(m,n)}}-1)^2}{(q-1)^2}.
\end{align*}

Substituting this to Proposition~\ref{prp:tomonic}, we obtain

\begin{align}\label{eqn:Efirst}\begin{split}
  \energy(\F_q[T]_{\le\delta})   &= (2q^{\delta+1}-1)^2 +
(q-1)\left({q^{\delta+1}-1}    \right)^2 + 2 (q-1)\sum_{i=1}^\delta q^i \left({q^{\delta+1-i}-1}    \right)^2 \\ & \qquad + (q-1)^2\sum_{m,n=1}^\delta {q^{m+n-1}(q^{\delta+1-\max{(m,n)}}-1)^2}
\\ &= 4q^{2\delta+2} -4q^{\delta+1}+1 +  (q-1)(q^{2\delta+2}-2q^{\delta+1}+1) +  2 (q-1)\sum_{i=1}^\delta q^i \left({q^{\delta+1-i}-1}    \right)^2 \\
& \qquad + (q-1)^2 \sum_{i=1}^\delta (q^{2i-1}+2(q^i+\dots+q^{2i-2}))(q^{\delta+1-i}-1)^2\\
\\ &=  q^{2\delta+3} +3q^{2\delta+2} -2q^{\delta+2} -2q^{\delta+1} +q +  2 (q-1)\sum_{i=1}^\delta q^i \left({q^{\delta+1-i}-1}    \right)^2 \\
& \qquad + (q-1)\sum_{i=1}^\delta (q^{2i} + q^{2i-1} - 2q^i)(q^{\delta+1-i}-1)^2
\\ & =  q^{2\delta+3} +3q^{2\delta+2} -2q^{\delta+2} -2q^{\delta+1} +q + (q-1)\sum_{i=1}^\delta (q^{2i} + q^{2i-1} )(q^{\delta+1-i}-1)^2. 
\end{split}
\end{align}

We now simplify the last term of the last line of~\eqref{eqn:Efirst} as follows: \begin{align*}
    (q-1)\sum_{i=1}^\delta (q^{2i} + q^{2i-1} )(q^{\delta+1-i}-1)^2 & = (q-1)(q+1)\sum_{i=1}^\delta q^{2i-1}(q^{2\delta+2-2i} -2q^{\delta+1-i}+1)
    \\ &= (q^2-1)\sum_{i=1}^\delta (q^{2\delta+1} -2q^{\delta+i} + q^{2i-1})
    \\ &= (q^2-1)\left(\delta q^{2\delta+1}- 2q^{\delta+1}\cdot \dfrac{q^{\delta}-1}{q-1} + q \cdot\dfrac{q^{2\delta}-1}{q^2-1}  \right)
    \\ &= \delta q^{2\delta+1}(q^2-1) -2q^{\delta+1}(q^\delta-1)(q+1) + q(q^{2\delta}-1)
    \\ & = \delta q^{2\delta+3} -2q^{2\delta+2}  -(\delta+1)q^{2\delta+1} +2q^{\delta+2}+2q^{\delta+1}-q.
\end{align*}

Substituting this to~\eqref{eqn:Efirst}, we obtain \begin{align*}
   \energy(\F_q[T]_{\le\delta})   &= q^{2\delta+3} +3q^{2\delta+2} -2q^{\delta+2} -2q^{\delta+1} +q  +   \delta q^{2\delta+3} -2q^{2\delta+2}  -(\delta+1)q^{2\delta+1} +2q^{\delta+2}+2q^{\delta+1}-q  \\
   & = (\delta+1)q^{2\delta+3} +q^{2\delta+2} -(\delta+1)q^{2\delta+1},
\end{align*} which completes our proof.

\section{Proof of Theorem~\ref{thm:mainfixed}} 
By considering permutations between the variables, we may let $\delta_1\ge \delta_3  \ge \delta_4 \ge  \delta_2$, which implies $\delta_{\min}=\delta_2$.
We then transform our problem to counting monic polynomials, by applying similar arguments as in Section~\ref{sec:tomonic}, In particular, similar to Proposition~\ref{prp:tomonic},  we have \begin{equation}
    \energy^{\cF}(\F_q[T];\delta_1,\delta_2,\delta_3,\delta_4) =(q-1)^3 \energy^{\cF}  (\cM;\delta_1,\delta_2,\delta_3,\delta_4),
\end{equation} with \[
\energy^{\cF}  (\cM;\delta_1,\delta_2,\delta_3,\delta_4) \coloneqq \#\{ (f_1,f_2,f_3,f_4)\in \energy^{\cF}(\F_q[T];\delta_1,\delta_2,\delta_3,\delta_4)   \colon f_1,f_2,f_3,f_4\in \cM\}.
\]
The term differences are due to the fact that our restrictions in this case already excluded zero polynomials.

Again, by applying Proposition~\ref{prp:fournumber},  our problem is equivalent to counting tuples of monic polynomials $
(g_1,g_2,h_1,h_2)$ with $\gcd(g_2,h_1)=1$ and \[
\deg g_1+\deg g_2 = \delta_1, \quad \deg h_1+\deg h_2=\delta_2, \quad \deg g_1+\deg h_1=\delta_3, \quad \deg g_2+\deg h_2 =\delta_4.
\]
Note that in the case $\delta=0$, we trivially have $\energy^{\cF}  (\cM;\delta_1,\delta_2,\delta_3,\delta_4)=1$. Therefore, the proof is complete in this case, and we may now let $\delta>0$.

Recall that we assume $\delta_1\ge \delta_3\ge \delta_4 \ge \delta_2$.
Note that for a fixed choice $g_2$ and $h_1$, $\deg g_1$ and $\deg h_2$ is uniquely determined. Furthermore, such choice is only possible if and only if \[\deg g_2 -\deg h_1 =\delta_4-\delta_2 \coloneqq \delta'\]
Note that from our assumption, $\deg g_2\ge \deg h_1$.
Hence, for each relatively prime pairs of $(g_2,h_1)$ with $\deg g_2-\deg h_1=\delta'$, there are  \[q^{\delta_1-\deg g_2}q^{\delta_2-\deg h_1 } = q^{\delta-\delta'-2\deg h_1}\] choices for $(g_1,h_2)$, where we recall $\delta_1+\delta_2=\delta$.

Then, we have  \begin{equation}\label{eqn:efixedmonic}
    \energy^{\cF}  (\cM;\delta_1,\delta_2,\delta_3,\delta_4) = \sum_{\substack{
       \gcd(g_2,h_1)=1 \\ \deg g_2 \le \delta_4\\ \deg h_1 \le \delta_2 \\ \deg g_2-\deg h_1=\delta'}
    } q^{\delta_1-\deg g_2}q^{\delta_2-\deg h_1}  = \sum_{\substack{\gcd(g_2,h_1)=1 \\ 0\le \deg h_1\le \delta_2\\ \deg g_2=\deg h_1+\delta'}} q^{\delta-\delta'-2\deg h_1}.
\end{equation}
Applying Proposition~\ref{prp:relprim} with respect to the last expression, we obtain 
\begin{equation}
    \energy^{\cF}(\F_q[T];\delta_1,\delta_2,\delta_3,\delta_4) = \sum_{i=0}^{\delta_2}  F(i+\delta',i)  q^{\delta-\delta'-2i}.
\end{equation}

First, when $\delta'=0$, we obtain
\begin{align*}
    \energy^{\cF}  (\cM;\delta_1,\delta_2,\delta_3,\delta_4) &= \sum_{i=0}^{\delta_2}  F(i,i)  q^{\delta-2i}= q^{\delta} + \sum_{i=1}^{\delta_2} (q-1)q^{\delta-1} = (\delta_2+1)q^{\delta} - \delta_2q^{\delta-1}.
\end{align*}
Then, when $\delta'\ne 0$, we obtain
\begin{align*}
           \energy^{\cF}(\F_q[T];\delta_1,\delta_2,\delta_3,\delta_4)  &= \sum_{i=0}^{\delta_2}  F(i+\delta',i)  q^{\delta-\delta'-2i}
    \\& = q^{\delta'} \cdot q^{\delta-\delta'} + \sum_{i=1}^{\delta_2 } (q-1)q^{2i+\delta'-1} \cdot q^{\delta-\delta'-2i}
    \\ & = (\delta_2+1)q^{\delta}-\delta_2q^{\delta-1}.
\end{align*}
Substituting these formulas to~\eqref{eqn:efixedmonic} completes the proof of Theorem~\ref{thm:mainfixed}.

\section{Proof of Theorem~\ref{thm:main1f} and Corollary~\ref{cor:main1}} As in the proof of Theorem~\ref{thm:mainfixed}, we easily obtain 
\[
\denergy^{f} (\F_q[T]_{\delta}) = (q-1)^3 \denergy^{f} (\cM_{\delta}).
\] We now proceed with only considering monic polynomials.
The case $\delta=0$ easily follows by using the same argument as in the beginning of the proof of Theorem~\ref{thm:mainfixed}. Next, when $\delta>0$, we  consider all solutions to~\eqref{eqn:multen} with $\deg f_i=\delta_i$ for $i=1,2,3,4$, with $\delta_1+\delta_2=\delta_3+\delta_4=\delta$. We then obtain
\begin{align}\begin{split}\label{eqn:Ef1-min}
\denergy^{f} (\cM_{\delta}) &= \sum  \energy^{\cF} (\cM;\delta_1,\delta_2,\delta_3,\delta_4) \\
&= \sum [\min(\delta_1,\delta_2,\delta_3,\delta_4)+1]q^{\delta} - \min(\delta_1,\delta_2,\delta_3,\delta_4)q^{\delta-1},
\end{split}
\end{align} where the sum is taken over all nonnegative integer tuples $(\delta_1,\delta_2,\delta_3,\delta_4)$ with $\delta_1+\delta_2=\delta_3+\delta_4=\delta>0$.

We now rewrite the sum in~\eqref{eqn:Ef1-min} with respect to $\min(\delta_1,\delta_2,\delta_3,\delta_4)$ as follows:
\begin{equation}\label{eqn:Ef1-sum}
    \denergy^{f} (\cM_{\delta}) = \sum_{i=0}^{\delta} g(i)[(i+1)q^{\delta}-iq^{\delta-1}],
\end{equation}
where $g(i)$ is the number of nonnegative integer tuples $(\delta_1,\delta_2,\delta_3,\delta_4)$  satisfying $\delta_1+\delta_2=\delta_3+\delta_4=\delta$ and $\min(\delta_1,\delta_2,\delta_3,\delta_4)=i$. 

By substituting $\delta_j'=\delta_j'-i$ for $j=1,2,3,4$, the number of such tuples is the number of nonnegative integer tuples $(\delta_1',\delta_2',\delta_3',\delta_4')$ with \[
\delta_1'+\delta_2'=\delta_3'+\delta_4'=\delta-2i,
\] such that at least one of the variables is equal to zero. By considering the zero placements and applying principles of inclusion-exclusion, we obtain \begin{equation}
    g(i) =\begin{cases} 4(\delta-2i), \quad &\text{if} \quad \delta>2i,\\
    1, \quad &\text{if} \quad \delta=2i,\\
    0, \quad &\text{otherwise}.
    \end{cases}
\end{equation}

Substituting this to~\eqref{eqn:Ef1-sum}, we obtain
\begin{equation}\label{eqn:Ef1-parity}
    \denergy^{f} (\cM_{\delta}) =  \begin{cases} \displaystyle\sum_{i=0}^{(\delta-1)/2} 4(\delta-2i)[(i+1)q^{\delta}-iq^{\delta-1}]  &\text{when }\delta \text{ is odd,}\\
   \dfrac{\delta+2}{2}q^\delta - \dfrac{\delta}{2}q^{\delta-1}+ \displaystyle\sum_{i=0}^{(\delta-2)/2} 4(\delta-2i)[(i+1)q^{\delta}-iq^{\delta-1}]   &\text{when }\delta \text{ is even.}
        
    \end{cases}
\end{equation}
We then simplify~\eqref{eqn:Ef1-parity} in each case. Note that for all non-negative integer $A$, we have \begin{align*}\begin{split}
\sum_{i=0}^A 4(\delta-2i)(i+1) &=4 \delta \sum_{i=0}^A  (i+1) - 8\sum_{i=0}^A i^2 - 8\sum_{i=0}^A i \\&= 2\delta(A+1)(A+2) - \dfrac{4A(A+1)(2A+1)}{3} - 4A(A+1)\\
&= \dfrac{A+1}{3} \left(6\delta(A+2) - 4A(2A+1) -12A  \right)
\\&= \dfrac{2(A+1)(A+2)(3\delta -4A)}{3},
\end{split}
\end{align*}

\begin{align*}\begin{split}
&\sum_{i=0}^A 4i(\delta-2i) = -\dfrac{4A(A+1)(2A+1)}{3} + 2\delta A(A+1)= \dfrac{2A(A+1)(3\delta-4A-2)}{3}.
\end{split}
\end{align*} Substituting these to~\eqref{eqn:Ef1-parity}, we first obtain that when $\delta$ is odd,
\begin{align*}
  \denergy^{f} (\cM_{\delta}) & = \dfrac{2\delta(\delta+2)(\delta+4)}{12}  q^{\delta} - \dfrac{2(\delta-1)(\delta+1)\delta}{6}q^{\delta-1}\\&= \binom{\delta+3}{3} q^{\delta} - \binom{\delta+1}{3}q^{\delta-1}.
\end{align*}
When $\delta$ is even,
\begin{align*}
     \denergy^{f} (\cM_{\delta}) & = \dfrac{\delta+2}{2}q^\delta - \dfrac{\delta}{2}q^{\delta-1}+ \dfrac{(\delta+2)(\delta+4)\delta}{6} q^{\delta} - \dfrac{(\delta-2)\delta(\delta+2)}{6}q^{\delta-1}
     \\ & = \dfrac{(\delta+2)(\delta^2+4\delta+3)}{6}q^{\delta} -\dfrac{\delta(\delta^2-1)}{6}q^{\delta-1} 
     \\ &= \binom{\delta+3}{3} q^{\delta} - \binom{\delta+1}{3}q^{\delta-1},
\end{align*}
which therefore completes the proof of Theorem~\ref{thm:main1f}. 
Then, to prove Corollary~\ref{cor:main1}, we note 
\begin{align*}
  \denergy^{f} (\cM_{\le \delta}) &= \sum_{i=0}^\delta \denergy^{f} (\cM_{i})\\
 & =   1 +  \sum_{i=1}^\delta \left[\binom{i+3}{3} q^{i} - \binom{i+1}{3}q^{i-1}\right]
     \\ & =  \binom{\delta+3}{3}q^{\delta} +1+  \sum_{i=1}^{\delta-1}\left[ \binom{i+3}{3} -\binom{i+2}{3} \right]q^i
     \\ & =   \binom{\delta+3}{3}q^{\delta}  +  \sum_{i=0}^{\delta-1}\binom{i+2}{2}q^i.
\end{align*}

\section{Proof of Theorems~\ref{thm:detgen}--\ref{thm:detgendiv}}
 \subsection{Proof of Theorem~\ref{thm:detgen}} 
We now count $\aenergy(\cM_{\le\delta};g)$, the number of solutions to~\eqref{eqn:multg} under the constraints $\deg f_i\le \delta$. For monic polynomials $g_2\in \F_q[T]$, denote \[
\aenergy(\cM_{\le\delta};g;g_2) \coloneqq \#\{ (f_1,f_2,f_3,f_4)\in \cM_{\le \delta}^4 \colon f_1f_2-f_3f_4=g, \gcd(f_2,f_4)=g_2\}.
\]We only need to consider the case $g_2\mid g$, and we also have \begin{equation}\label{eqn:Aggsum}
\aenergy(\cM_{\le\delta};g) = \sum_{\substack{g_2\mid g}}\aenergy(\cM_{\le\delta};g;g_2).
\end{equation}

We first obtain the number of solutions to~\eqref{eqn:multg} with $\gcd(f_2,f_4)=1$, by combining Propositions~\ref{prp:73} and~\ref{prp:73g} over all possible tuples $(\delta_1,\delta_2,\delta_3,\delta_4)$.  We then obtain
\begin{equation}\label{eqn:Aggeneral}
    \aenergy( \cM_{\le \delta} ;g;1) = \sum_{n=\delta_g+1}^{2\delta} \left[\sum_{\substack{\delta_1+\delta_2=\delta_3+\delta_4=n\\ 1\le \delta_1,\delta_2,\delta_3,\delta_4\le \delta} 
    } (q^n-q^{n-1}) + \sum_{\substack{\delta_1+\delta_2=\delta_3+\delta_4=n\\ \min(\delta_1,\delta_2,\delta_3,\delta_4)=0}
    } q^n  \right] + O(q^{\delta_g-1})
\end{equation}

For the case where $g_2\ne 1$, we first let $(f_1^{(0)},f_2^{(0)},f_3^{(0)},f_4^{(0)})$ be a solution of~\eqref{eqn:multg} with $\gcd(f_2,f_4)=g_2$. Then, note that this tuple is a solution to~\eqref{eqn:multg} if and only if $(f_1^{(0)},f_2^{(0)}/g_2,f_3^{(0)},f_4^{(0)}/g_2)$ is a solution to \[
f_1f_2-f_3f_4=g/g_2 
\] with $\gcd(f_2,f_4)=1$ and $\deg f_1,\deg f_3 \le \delta$, $\deg f_2,\deg f_4 \le \delta-\deg g_2$. By this observation and applying~\eqref{eqn:Aggeneral}, we then obtain 
 
\begin{equation}\label{eqn:Aggcd}
    \aenergy( \cM_{\le \delta} ;g;g_2) = \sum_{n=\delta_g-\deg g_2+1}^{2\delta-\deg g_2} \left[\sum_{\substack{\delta_1+\delta_2=\delta_3+\delta_4=n\\ 1\le \delta_1,\delta_3\le \delta\\ 1\le \delta_2,\delta_4\le \delta-\deg g_2} 
    } (q^n-q^{n-1}) + \sum_{\substack{\delta_1+\delta_2=\delta_3+\delta_4=n\\ \min(\delta_1,\delta_2,\delta_3,\delta_4)=0\\ \delta_1,\delta_3 \le \delta\\\delta_2,\delta_4 \le \delta-\deg g_2}
    } q^n  \right] + O(q^{\delta_g-\deg g_2-1}).
\end{equation}
Note that~\eqref{eqn:Aggeneral} is a special case of~\eqref{eqn:Aggcd} when $g_2=1$.

We now discuss the asymptotic for $\aenergy( \cM_{\le \delta} ;g)$. First, excluding the error term, note that $\aenergy( \cM_{\le \delta} ;g;1,1)$ is a polynomial in $q$. Furthermore, this polynomial has degree $2\delta$, with leading coefficient coming only from the tuples $(\delta_1,\delta_2,\delta_3,\delta_4)=(\delta,\delta,\delta,\delta)$ in Proposition~\ref{prp:73}. Therefore, we  conclude that, as $q \to \infty$, \begin{equation}\label{eqn:Aggeneral-asymp}
       \aenergy( \cM_{\le \delta} ;g;1) = q^{2\delta}+ O(q^{2\delta-1}).
\end{equation}
For the other terms, we apply similar arguments to~\eqref{eqn:Aggcd} to obtain, as $q \to \infty$, \begin{equation}\label{eqn:Aggcd-asymp}
       \aenergy( \cM_{\le \delta} ;g;g_2) = q^{2\delta-\delta_{g_2}}+ O(q^{2\delta-\delta_{g_2}-1}).
\end{equation}
Substituting~\eqref{eqn:Aggeneral-asymp} and \eqref{eqn:Aggcd-asymp} to~\eqref{eqn:Aggsum} completes the proof of Theorem~\ref{thm:detgen}.

 \subsection{Proof of Theorem~\ref{thm:detconstant}}
We now proceed to derive an exact formula for $    \aenergy( \cM_{\le \delta} ;\Delta)$, for a  fixed $\Delta\in \F_q^*$. Following our argument for the proof of Theorem~\ref{thm:detgen}, we have \begin{align*}
    \aenergy( \cM_{\le \delta} ;\Delta) &= \sum_{n=1}^{2\delta} \left[\sum_{\substack{\delta_1+\delta_2=\delta_3+\delta_4=n\\ 1\le \delta_1,\delta_2,\delta_3,\delta_4\le \delta} 
    } (q^n-q^{n-1}) + \sum_{\substack{\delta_1+\delta_2=\delta_3+\delta_4=n\\ \min(\delta_1,\delta_2,\delta_3,\delta_4)=0}
    } q^n  \right].\end{align*}
Note that in this case, the error term $O(q^{\deg g-1})$ disappears since $\deg g =0$.

Separating the case $1\le n\le \delta$ and $\delta <n\le 2\delta$, we obtain when $\delta\ge 2$,
    \begin{align*}
     \aenergy( \cM_{\le \delta} ;\Delta)&= \sum_{n=1}^\delta  \left[(n-1)^2 (q^n-q^{n-1}) + 4nq^n \right] + \sum_{n=\delta+1}^{2\delta} \left[(2\delta+1-n)^2(q^n-q^{n-1})\right]
    \\ & = (\delta+1)^2 q^\delta + \sum_{n=1}^{\delta-1}\left[(n-1)^2 q^n - n^2 q^{n} + 4nq^n \right] + q^{2\delta} -\delta^2 q^{\delta}+ \sum_{n=\delta +1}^{2\delta-1} [(2\delta+1-n)^2 - (2\delta-n)^2] q^n
    \\ &= q^{2\delta} + (2\delta+1)q^{\delta}+\sum_{n=\delta+1}^{2\delta-1} (4\delta+1-2n)q^n  + \sum_{n=1}^{\delta-1} (2n+1)q^n
    \\ &= q^{2\delta}-1+ \sum_{n=0}^{\delta-1} (2\delta+1-2n)q^{\delta+n} + \sum_{n=0}^{\delta-1}(2n+1)q^n.
\end{align*}

Simplifying further, we obtain
\begin{align*}
     \aenergy( \cM_{\le \delta} ;\Delta)
     &=q^{2\delta}-1+ [(2\delta+1)q^{\delta}+1] \sum_{n=0}^{\delta-1} q^{n} -(2q^{\delta}-2) \sum_{n=0}^{\delta-1} nq^n
     \\ & = q^{2\delta} -1 + \dfrac{[(2\delta+1)q^{\delta}+1] (q^{\delta}-1)}{q-1} -\dfrac{q(2q^{\delta}-2)[(\delta-1)q^{\delta}-\delta q^{\delta-1}+1]}{(q-1)^2}
     \\ & = \dfrac{(q^{2\delta} -1 )(q-1)^2 + (q-1)(q^{\delta}-1) [(2\delta+1)q^{\delta}+1] -q(2q^{\delta}-2)[(\delta-1)q^{\delta}-\delta q^{\delta-1}+1]   }{(q-1)^2}
     \\& = \dfrac{q(q^{\delta}-1)(q^{\delta+1}+q^{\delta}+q-3)}{(q-1)^2}.
\end{align*}
which completes the proof.

 \subsection{Proof of Theorem~\ref{thm:detgendiv}}
To count $\aenergy^f(\cM_{\le\delta};g)$, we first denote 
\begin{align*}
&\aenergy^f( \cM_{\le \delta} ;g;g_2) \coloneqq \#\{ (f_1,f_2,f_3,f_4)\in \cM^4\colon f_1f_2-f_3f_4=g, f_1f_2,f_3f_4\in \cM_{\le \delta}, \gcd(f_2,f_4)=g_2 \}.
\end{align*} We proceed similarly as in the proof of Theorem~\ref{thm:detgen} and obtain
\begin{equation}\label{eqn:Aggsumdiv}
\aenergy^f(\cM_{\le\delta};g) = \sum_{\substack{g_2\mid g}}\aenergy^f(\cM_{\le\delta};g;g_2).
\end{equation}

Using Proposition~\ref{prp:73} with different summations, we obtain, as in the proof of Theorem~\ref{thm:detgen}, \begin{align*}
    \aenergy^f( \cM_{\le \delta} ;g;g_2) &= \sum_{n=\delta_g-\deg g_2+1}^{\delta-\deg g_2} \left[\sum_{\substack{\delta_1+\delta_2=\delta_3+\delta_4=n\\ \delta_1,\delta_2,\delta_3,\delta_4\ge 1} 
    } (q^n-q^{n-1}) + \sum_{\substack{\delta_1+\delta_2=\delta_3+\delta_4=n\\ \min(\delta_1,\delta_2,\delta_3,\delta_4)=0}
    } q^n  \right] + O(q^{\delta_g-\deg g_2-1})
    \\
    &=\sum_{n=\delta_g-\deg g_2+1}^{\delta-\deg g_2}  \left[(n+1)^2q^n-(n-1)^2q^{n-1}  \right]+ O(q^{\delta_g-\deg g_2-1}),
\end{align*} which alongside~\eqref{eqn:Aggsumdiv} implies that $ \aenergy( \cM_{\le \delta} ;g)$ is a polynomial with respect to $q$. In particular, we explicitly obtain \begin{equation}\label{eqn:Aggcddiv}
    \aenergy^f(\cM_{\le\delta};g) = \sum_{\substack{g_2\mid g}} \sum_{n=\delta_g-\deg g_2}^{\delta-\deg g_2} \left[(n+1)^2q^n-(n-1)^2q^{n-1}   \right] + O(q^{\delta_g-1+o(1)}).
\end{equation}

To obtain the leading term of~ $\aenergy^f(\cM_{\le\delta};g)$ (viewed as polynomials on $q$), we first note that the degree of $\aenergy^f(\cM_{\le\delta};g)$, viewed as a polynomial in terms of $q$, is $\delta$. Furthermore, the $q^\delta$ term only comes from $ \aenergy( \cM_{\le \delta} ;g;1)$. Viewing the summation, in~\eqref{eqn:Aggcddiv} we see that the number of $q^{\delta}$ terms is exactly the number of nonnegative integer solutions to $\delta_1+\delta_2=\delta_3+\delta_4=\delta$, which is $(\delta+1)^2$. Therefore, the leading term of $ \aenergy( \cM_{\le \delta} ;g)$ is exactly $(\delta+1)^2q^{\delta}$, which completes our proof.

\section*{Acknowledgements}
The author would like to thank anonymous reviewers for comments, corrections and suggestions.

\end{document}